\documentclass[11pt]{article}
\usepackage{a4wide}

\usepackage[a4paper, margin=2.3cm]{geometry}
\usepackage{amsmath,amssymb,amsthm, comment, enumitem, array}
\usepackage{color}
\usepackage{parskip}
\usepackage{hyperref}
%%%%%%%%%% Start TeXmacs macros
\usepackage{parskip}
\usepackage{setspace}
\usepackage{graphicx}
\usepackage{mathtools}
\usepackage[thinc]{esdiff}
\usepackage[euler]{textgreek}
\usepackage{subfig}
\usepackage{xcolor}

\usepackage{scalerel,stackengine}
\stackMath
\newcommand\reallywidehat[1]{%
\savestack{\tmpbox}{\stretchto{%
  \scaleto{%
    \scalerel*[\widthof{\ensuremath{#1}}]{\kern-.6pt\bigwedge\kern-.6pt}%
    {\rule[-\textheight/2]{1ex}{\textheight}}%WIDTH-LIMITED BIG WEDGE
  }{\textheight}% 
}{0.5ex}}%
\stackon[1pt]{#1}{\tmpbox}%
}

\newtheorem{theorem}{Theorem}[section]

\newtheorem{remark}{Remark}[section]
\newtheorem{corollary}[theorem]{Corollary}
\newtheorem{lemma}[theorem]{Lemma}

\newtheorem{definition}[theorem]{Definition}

\newtheorem*{definition*}{Definition}
\newtheorem{example}[theorem]{Example}

\hypersetup{
    colorlinks=true,
    linkcolor = blue,
    citecolor=magenta,
    urlcolor=cyan,
}

\usepackage[capitalise, nameinlink]{cleveref}
\crefname{equation}{}{}
\crefname{figure}{{\sc Figure}}{{\sc Figure}}
\crefname{subsection}{Subsection}{Subsections}

%%%%%%%%%%%%%%%%%%%%%%%%%%%%% For Thang %%%%%%%%%%%%%%%%%%%%%%%%%%%

%\def\F{\mathbb{F}_q}

%\def\H{\mathcal{H}}

%%%%%%%%%%%%%%%%%%%%%%%%%%%%% For Boqing %%%%%%%%%%%%%%%%%%%%%%%%%%%
\def\bC{\mathbb{C}}
\def\bE{\mathbb{E}}
\def\cN{\mathcal{N}}

\def\bF{\mathbb{F}}

\def\cM{\mathcal{M}}

\def\cM{\mathcal{M}}

\def\fM{\mathfrak{M}}
\def\fE{\mathfrak{E}}
\def\cX{\mathcal{X}}

\def\mod{\textsf{mod }}

\def\Ker{\textsf{Ker}}
\def\HS{\textsf{HS}}
\def\i{\mathfrak{i}}

\def\<{\langle}
\def\>{\rangle}
%%%%%%%%%%%%%%%%%%%%%%%%%%%%%%%%%%%%%%%%%%%%%%%%%%%%%%%%%%%%%%%%%%%%

\begin{document}
%\title{New-type Quasirandom Groups and Applications}
%\author{Thang Pham\footnote{University of Science, Vietnam National University, Hanoi. Email: phamanhthang.vnu@gmail.com} \and Boqing Xue\footnote{Institute of Mathematical Sciences, ShanghaiTech University. Email: xuebq@shagnhaitech.edu.cn}} %Address: 393 Middle Huaxia Road, 201203, Shanghai, China. 
%\maketitle

\begin{center}
\LARGE New-type Quasirandom Groups and Applications %05B25}
\end{center}

\begin{center}
Thang Pham\footnote{University of Science, Vietnam National University, Hanoi. Email: phamanhthang.vnu@gmail.com} and Boqing Xue\footnote{Institute of Mathematical Sciences, ShanghaiTech University. Email: xuebq@shagnhaitech.edu.cn}
\end{center}

\begin{abstract}
This paper aims to introduce a more general definition of quasirandom groups and  generalize several well-known results in the literature in this new setting. More precisely, let $G$ be a semi-direct product of groups and $X\subseteq G$, we provide conditions such that one can find tuples $(x_0, \ldots, x_k)\in X^{k+1}$ satisfying $x_1x_2\ldots x_k=x_0$ or conditions to guarantee that the product set $XX$ grows exponentially. In a special case of the group of rigid-motions in the plane over an arbitrary finite field, our results offer a reasonably complete description of structures of this group. 
\end{abstract}
\bigskip

\textbf{Keywords and phrases}: Quasirandom group; semi-direct product; rigid motion; product growth

\textbf{Mathematics Subject Classification}: 20D60; 20P50; 52C10

\section{Introduction}
Let $(G,\cdot)$ be a finite group and $D$ be a positive integer. The group $G$ is called \textit{$D$-quasirandom} if all non-trivial representations of $G$ are of degree at least $D$. This notion was introduced by Gowers \cite{W} in his solution of the following question due to Babai and S\'{o}s \cite{BS}: Does there exist a constant $c>0$ such that every finite group $G$ has a product-free subset of size at least $c|G|$?

Gowers proved the following theorem.
\begin{theorem} \label{thm_Gowers}
If $G$ is $D$-quasirandom, then
\begin{equation}\label{one}
\left\vert \#\{(x,y,xy)\in X\times Y\times Z\}-\frac{|X||Y||Z|}{|G|}\right\vert \ll \sqrt{\frac{|G||X||Y||Z|}{D}}
\end{equation}
for every $X,Y,Z\subseteq G$. 
\end{theorem}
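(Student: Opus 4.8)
The plan is to run non-abelian Fourier analysis on $G$ and isolate the contribution of the trivial representation as the main term, bounding everything else by the quasirandomness hypothesis. First I would rewrite the count as a convolution inner product. Writing $\mathbf{1}_X,\mathbf{1}_Y,\mathbf{1}_Z$ for the indicator functions and setting $(f*g)(t)=\sum_{s\in G}f(s)g(s^{-1}t)$, the number of pairs $(x,y)\in X\times Y$ with $xy\in Z$ is
\[
\#\{(x,y,xy)\in X\times Y\times Z\}
=\sum_{t\in G}(\mathbf{1}_X*\mathbf{1}_Y)(t)\,\mathbf{1}_Z(t)
=\langle \mathbf{1}_X*\mathbf{1}_Y,\mathbf{1}_Z\rangle .
\]

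Next I would pass to the Fourier side. For each irreducible representation $\rho$ of $G$ of degree $d_\rho$, set $\widehat{f}(\rho)=\sum_{g\in G}f(g)\rho(g)$, a $d_\rho\times d_\rho$ matrix. The convolution theorem gives $\widehat{\mathbf{1}_X*\mathbf{1}_Y}(\rho)=\widehat{\mathbf{1}_X}(\rho)\widehat{\mathbf{1}_Y}(\rho)$, and the Plancherel formula $\langle f_1,f_2\rangle=\frac{1}{|G|}\sum_\rho d_\rho\,\tr\!\big(\widehat{f_1}(\rho)\widehat{f_2}(\rho)^{*}\big)$ turns the count into
\[
\frac{1}{|G|}\sum_\rho d_\rho\,
\tr\!\big(\widehat{\mathbf{1}_X}(\rho)\,\widehat{\mathbf{1}_Y}(\rho)\,\widehat{\mathbf{1}_Z}(\rho)^{*}\big).
\]
Since $\widehat{\mathbf{1}_X}(\rho_0)=|X|$ for the trivial representation $\rho_0$ (and likewise for $Y,Z$), its summand contributes exactly $\frac{|X||Y||Z|}{|G|}$, the main term; the error $E$ to be estimated is the sum over all non-trivial $\rho$.

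For the error I would bound each non-trivial summand via the trace inequality $|\tr(ABC^{*})|\le \|A\|_{\mathrm{op}}\|B\|_{\HS}\|C\|_{\HS}$, peeling off $\widehat{\mathbf{1}_X}(\rho)$ in operator norm. The decisive use of quasirandomness is a uniform operator-norm bound valid for every non-trivial $\rho$: since a single term is at most the whole Parseval sum,
\[
\|\widehat{\mathbf{1}_X}(\rho)\|_{\mathrm{op}}^{2}
\le \|\widehat{\mathbf{1}_X}(\rho)\|_{\HS}^{2}
\le \frac{1}{d_\rho}\sum_{\sigma} d_\sigma\,\|\widehat{\mathbf{1}_X}(\sigma)\|_{\HS}^{2}
=\frac{|G||X|}{d_\rho}\le \frac{|G||X|}{D},
\]
where the last step is precisely $d_\rho\ge D$. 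Pulling this maximum out in front and applying Cauchy--Schwarz over $\rho$ together with Parseval ($\sum_\rho d_\rho\|\widehat{\mathbf{1}_Y}(\rho)\|_{\HS}^{2}=|G||Y|$, and similarly for $Z$) on the two remaining factors gives
\[
|E|
\le \frac{1}{|G|}\sqrt{\frac{|G||X|}{D}}\,
\Big(\sum_\rho d_\rho\|\widehat{\mathbf{1}_Y}(\rho)\|_{\HS}^{2}\Big)^{1/2}
\Big(\sum_\rho d_\rho\|\widehat{\mathbf{1}_Z}(\rho)\|_{\HS}^{2}\Big)^{1/2}
=\sqrt{\frac{|G||X||Y||Z|}{D}},
\]
which is the claimed estimate, in fact with implied constant $1$.

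The Fourier bookkeeping (convolution theorem, Plancherel normalization, identification of the main term) and the matrix-norm inequalities are routine. The one genuinely load-bearing step, and the only place the hypothesis is used, is the passage $\|\widehat{\mathbf{1}_X}(\rho)\|_{\mathrm{op}}\le\sqrt{|G||X|/D}$: here the degree lower bound $d_\rho\ge D$ upgrades a trivial $\ell^2$-type bound on a Fourier coefficient into a strong operator-norm ($\ell^\infty$-type) bound, and it is this gain that is then spread across the other two factors by Cauchy--Schwarz. I expect the only care needed elsewhere to be in tracking the $\frac{1}{|G|}$ and $d_\rho$ weights consistently through Plancherel.
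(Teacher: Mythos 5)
Your proof is correct (indeed with implied constant $1$) and follows essentially the same route as the paper, which obtains Theorem~\ref{thm_Gowers} as the $k=2$, $H=\{1\}$ specialization of Theorem~\ref{thm_mixing}: nonabelian Plancherel, the trivial representation supplying the main term $|X||Y||Z|/|G|$, and the error controlled by the degree bound $d_\rho\ge D$ together with Parseval and Cauchy--Schwarz. The only cosmetic difference is that you peel off $\widehat{\mathbf{1}_X}(\rho)$ via the operator-norm bound $\|\widehat{\mathbf{1}_X}(\rho)\|_{\mathrm{op}}\le \sqrt{|G||X|/D}$ before applying Cauchy--Schwarz to the $Y$ and $Z$ factors, whereas the paper uses submultiplicativity of the Hilbert--Schmidt norm and absorbs the factor $d_\rho^{k-1}\ge D^{k-1}$ into the Fourier weights.
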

This means that when $X, Y$ and $Z$ are large enough, then the number of triples $(x, y, xy)\in X\times Y\times Z$ is close to the expected value. As a direct application, if $X\subseteq G$ is a product-free subset of $G$, then we have $|X|\ll |D|^{-1/3}|G|$. Gowers also asked whether a similar result holds for three-term progressions, i.e. triples of the form $(x, xy, xy^2)$. This question was solved by Tao \cite{Terry} for $SL_d(\mathbb{F}_q)$, Peluse \cite{Pau} for non-abelian finite simple groups, and by Bhangale, Harsha and Roy \cite{BHR} for all finite quasirandom groups. 

Another interesting application of the estimate (\ref{one}) is on the growth of product of sets in quasirandom groups. More precisely, for $X, Y\subseteq G$, we set $Z=XY$, then 
\begin{equation} \label{eq_prodgrowth}
|XY|\gg \min \left\lbrace |G|, \,\frac{D}{|G|} |X||Y|\right\rbrace.
\end{equation}
We note that a more general statement of this result in the form of $||1_X\ast 1_Y||_{2}^2$ was studied by Babai, Nikolov, and Pyber \cite{BNP}. 

In the setting of the group $SL_2(\mathbb{F}_q)$, it is well--known that $D=(q-1)/2$ (see \cite{DSV}, page 102). The estimate \eqref{eq_prodgrowth} gives $|XX|\gg \min \left\lbrace q^3, q^{-2}|X|^2 \right\rbrace$, so any subset $X$ of $SL_2(\bF_q)$ with $|X|=q^{\alpha}$ $(\alpha>2)$ has exponentially product growth. 

It is worth noting that computing the product of elements from a group is a fundamental problem in theoretical computer science. Gowers and Viola \cite{GV} studied mixing in several non-quasirandom groups, and obtained results on communication complexity. One of the models considered in their paper is the affine group $\textsf{Aff}(\bF_q)$ over $\bF_q$, which is a semi-product group. Since this group has large subgroups, the estimates \eqref{one} and \eqref{eq_prodgrowth} do not hold anymore. 

The main purpose of this paper is to extend the estimate (\ref{one}) in the setting of semi-direct product of groups, i.e. groups $G$ of the form $N\rtimes_\varphi H$. If $H=\{1\}$ (the trivial multiplicative group), then our result recovers the estimate (\ref{one}). Our initial motivation of studying this topic comes from the following question: given a set $X$ of rigid-motions in the plane over an arbitrary finite field $\mathbb{F}_q$, under what conditions on $X$ does the set $XX$ grow exponentially? 

We start with a simple observation. Consider the group $G_0=\bF_q^2\rtimes_\varphi SO_2(\bF_q)$ of rigid motions with $|G_0| = q^2(q\pm 1)$. (See more details in section \ref{sec_rm} and Example \ref{ex1}.) Let $\gamma$ be a generator of the cyclic group $SO_2(\bF_q)$. Assume that $|SO_2(\bF_q)|=q\pm 1 =kl$, and let 
\[
\cX=\{(z,\gamma^{kj}):\, z\in \mathbb{F}_q^2,\, 0\leq j\leq l-1\}.
\]
Then $\cX$ is a subgroup of $G_0$ and $\cX\cX=\cX$. Here $|\cX|/|G_0|=1/k$. This infers that, it is possible to choose an arbitrarily large subset $X\subseteq G_0$ such that the product $XX$ does not grow in general. In other words, a condition on the size of $X$ is not enough to guarantee the expanding property of $XX$. 

The second observation we want to mention here is that, one can apply the estimate (\ref{eq_prodgrowth}) on $G_0$ to obtain that 
\[|\cX|=|\cX\cX|\gg \min \left\lbrace q^3, ~\frac{D}{q^3}|\cX|^2\right\rbrace \gg \frac{D}{k}|\cX|.\]
The above example suggests that the number $D$ for the group of rigid motions should be at most a constant which does not depend on $q$. This is true and will be confirmed in Theorem \ref{thm_repn_dim_H0} below. 

Putting these two observations together, we realize that in order to understand structures of the product set $XX$, a deeper studying is needed. 

In this paper, our results will be stated and proved in the setting of semi-direct product groups. The group of rigid-motions is a special case. The main tool we will use is non-abelian Fourier analysis.
\subsection{Main results on semi-direct product groups}
Let $(N,\cdot)$ and $(H,\cdot)$ be two groups with identities $1_N$ and $1_H$, respectively. Assume that $\varphi:\, H\rightarrow Aut(N)$ is a group homomorphism. For simplicity, we denote $\varphi_h=\varphi(h)$ for $h\in H$. The semi-direct product $(G,\cdot)=(N,\cdot)\rtimes_\varphi (H,\cdot)$ is a group of order $|N||H|$. More explicitly, one has
\[
G=\{(z,h):\,  z\in N, h\in H\}
\]
with the group law given by 
\[
(z_1,h_1)(z_2,h_2) = (z_1\,\varphi_{h_1}(z_2),\, h_1h_2).
\]
%The identity element is $(1_N,1_H)$, and the inverse is given by $(z,h)^{-1}= (\varphi_{h^{-1}}(z^{-1}), h^{-1})$. 
Through this paper, for each $g\in G$, we write $g:=(\ddot{g},\dot{g})$ or $g=(g^{\cdot\cdot}, g^{\cdot})$.

%For an element $g\in G$, we will use the notation $g:=(\ddot{g},\dot{g})=(g^{\cdot\cdot}, g^{\cdot})$, where $\ddot{g}\in N$ and $\dot{g}\in H$. %It follows from 
%\[
%g_1g_2 = (\ddot{g_1},\dot{g_1})(\ddot{g_2},\dot{g_2}) = (\ddot{g_1}\,\varphi_{\dot{g_1}}(\ddot{g_2}),\, \dot{g_1}\dot{g_2})
%\]
%that $(g_1g_2)^\cdot = \dot{g_1}\dot{g_2}$ and $(g^{-1})^\cdot = (\dot{g})^{-1}$. 

The subgroup $\widetilde{N}:=\{(z,1_H):\, z\in N\}$ is a normal subgroup of $G$, which gives
\[
G/\widetilde{N} \simeq H.
\] 
Denote by $\pi:\, G\rightarrow G/\widetilde{N}$ the quotient map. If $\rho:\, G\rightarrow GL(V)$ is a complex representation of $G$ such that $\rho|_{\widetilde{N}}$ is the identity, then $\Ker(\rho)\supseteq \widetilde{N}$. As a result,  there is a unique homomorphism $\dot{\rho}:\, G/\widetilde{N}\rightarrow GL(V)$ such that $\rho = \dot{\rho}\pi$. Here, we can view $\dot{\rho}$ as a representation of $H$. On the other hand, for any representation $\dot{\rho}$ of $H \simeq G/\widetilde{N}$, the homomorphism $\rho=\dot{\rho}\pi$ is a representation of $G$.

Now all the representations of $G$ can be divided into the following two types: 

\textsf{\bf Type I:} a representation $\rho$ that is lifted from some representation $\dot{\rho}$ of $H$;

\textsf{\bf Type II:} a representation $\rho$ such that $\rho((z,1_H))$ is not the identity for some $z\in N$.

%Let $\widehat{G}$ and $\widehat{H}$ be the sets of all (non-isomorphic) irreducible complex representations of $G$ and $H$, respectively. Assume that $\widehat{H}=\{\dot{\rho_0},\dot{\rho_1},\ldots,\dot{\rho_{Q-1}}\}$, where $\dot{\rho_0}$ is the trivial representation. Then all irreducible representations of $G$ of type I is given by $\widehat{G}_I = \{\rho_0,\rho_1,\ldots,\rho_{Q-1}\}$, where 
%\[
%\rho_r(g) = \dot{\rho_r}(\dot{g}),\quad (g\in G,\,\, 0\leq r\leq Q-1).
%\]
%Here $\rho_0$ is the trivial representation of $G$. We also denote the set of the irreducible representations of $G$ of type II by $\widehat{G}_{II}=\{\rho'_1,\rho_2',\ldots,\rho_{Q'}'\}$. 

%Let $d_\rho$ be the dimension of a representation $\rho$, and let $\chi_\rho$  be its character. 

We are interested in the situation that type II representations have large degree. %Moreover, since $\widetilde{N}$ is an abelian subgroup of $G$ with $[G:\, \widetilde{N}]=|H|$, the dimension of any irreducible representation of $G$ is no large than $|H|$.  

\begin{definition}
Let $D$ be a positive number. We say that $G= N\rtimes_\varphi H$ is $D$-quasirandom on $N$ if the degree of any representation of $G$ of type II is at least $D$.   
\end{definition}

We note that if a group $G'$ is $D$-quasirandom in Gowers's definition, then it is $D$-quasirandom in this definition by viewing $G' \simeq G'\rtimes \{1\}$. 
However, the inverse is not true, the group of rigid-motions (with Theorem \ref{thm_repn_dim_H0} below) is an example. 

Our first result states as follows. 
\begin{theorem} \label{thm_mixing}
Let $G=N\rtimes_\varphi H$ be defined as previous, which is $D$-quasirandom on $N$. For any $k\geq 2$ and $X_0,X_1,\ldots,X_k\subseteq G$, denote
\[
\cM_k = \#\{(x_0,x_1,\ldots,x_k)\in X_0\times X_1\times\ldots\times X_k:\, x_1x_2\cdots x_k=x_0\},
\]
and 
\[
\dot{\cM_k}= \#\{(x_0,x_1,\ldots,x_k)\in X_0\times X_1\times\ldots\times X_k:\, \dot{x_1}\dot{x_2}\cdots \dot{x_k}=\dot{x_0}\}.
\]
Then 
\[
\left|\cM_k -  \frac{1}{|N|}\dot{\cM_k}\right|\leq \sqrt{\frac{|G|^{k-1}|{X_0}||{X_1}|\ldots |{X_k}|}{D^{k-1}}}.
\]
\end{theorem}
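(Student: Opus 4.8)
The plan is to run the non-abelian Fourier machinery on $G$ directly and to recognize the main term $\frac{1}{|N|}\dot{\cM_k}$ as precisely the contribution of the type I representations. Write $\widehat{f}(\rho)=\sum_{g\in G}f(g)\rho(g)$ for the Fourier transform of $f\colon G\to\bC$ at an irreducible representation $\rho$ of degree $d_\rho$, and recall the convolution identity $\widehat{f_1*\cdots*f_k}(\rho)=\widehat{f_1}(\rho)\cdots\widehat{f_k}(\rho)$ together with Parseval's formula $\langle f,h\rangle=\frac{1}{|G|}\sum_\rho d_\rho\,\tr\!\big(\widehat{f}(\rho)^*\widehat{h}(\rho)\big)$. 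Observing that $\cM_k=\langle 1_{X_0},\,1_{X_1}*\cdots*1_{X_k}\rangle$, these two facts give
\[
\cM_k=\frac{1}{|G|}\sum_{\rho\in\widehat{G}}d_\rho\,\tr\!\big(\widehat{1_{X_0}}(\rho)^*\,\widehat{1_{X_1}}(\rho)\cdots\widehat{1_{X_k}}(\rho)\big),
\]
and the first move is to split this sum according to the type I / type II dichotomy.

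For the type I part, each such irreducible $\rho$ has the form $\rho=\dot{\rho}\circ\pi$ for a unique irreducible $\dot{\rho}$ of $H\simeq G/\widetilde{N}$, and these exhaust the type I irreducibles bijectively. Since $\rho((z,h))=\dot{\rho}(h)$ depends only on the $H$-coordinate, introducing the pushforward $F_i(h):=\#\{x\in X_i:\dot{x}=h\}$ yields $\widehat{1_{X_i}}(\rho)=\widehat{F_i}(\dot{\rho})$ and $d_\rho=d_{\dot{\rho}}$. Applying Parseval on $H$ to $\dot{\cM_k}=\langle F_0,\,F_1*\cdots*F_k\rangle_H$ and using $|G|=|N||H|$, the entire type I block collapses to exactly $\frac{1}{|N|}\dot{\cM_k}$. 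Hence
\[
\cM_k-\frac{1}{|N|}\dot{\cM_k}=\frac{1}{|G|}\sum_{\rho\ \text{type II}}d_\rho\,\tr\!\big(\widehat{1_{X_0}}(\rho)^*\,\widehat{1_{X_1}}(\rho)\cdots\widehat{1_{X_k}}(\rho)\big),
\]
so it remains only to bound the right-hand side.

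Writing $A_i=\widehat{1_{X_i}}(\rho)$, I would estimate each trace by Cauchy--Schwarz for the Hilbert--Schmidt inner product followed by submultiplicativity,
\[
\big|\tr(A_0^*A_1\cdots A_k)\big|\le\|A_0\|_{\HS}\,\|A_1\cdots A_k\|_{\HS}\le\|A_0\|_{\HS}\,\|A_k\|_{\HS}\prod_{i=1}^{k-1}\|A_i\|_{\mathrm{op}}.
\]
Quasirandomness enters through the operator norms of the $k-1$ interior factors: from Plancherel, $d_\rho\|\widehat{1_{X_i}}(\rho)\|_{\HS}^2\le\sum_{\rho'}d_{\rho'}\|\widehat{1_{X_i}}(\rho')\|_{\HS}^2=|G||X_i|$, so for a type II $\rho$ (where $d_\rho\ge D$) one gets $\|A_i\|_{\mathrm{op}}\le\|A_i\|_{\HS}\le\sqrt{|G||X_i|/D}$. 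Pulling this uniform bound out of the sum and applying Cauchy--Schwarz to $\sum_\rho d_\rho\|A_0\|_{\HS}\|A_k\|_{\HS}\le(|G||X_0|)^{1/2}(|G||X_k|)^{1/2}$ (again via Plancherel, harmlessly extending the sum over all $\rho$ since the summands are nonnegative) produces exactly $\sqrt{|G|^{k-1}|X_0|\cdots|X_k|/D^{k-1}}$.

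The genuinely substantive step is the clean separation of the two representation types and the verification that the type I block reproduces $\frac{1}{|N|}\dot{\cM_k}$ on the nose; once this identification is secured, the tail estimate is a routine Gowers-type argument, whose only subtlety is to spend the $D$-saving on the $k-1$ interior factors (through the operator norm) while reserving the two outer factors for the Hilbert--Schmidt Cauchy--Schwarz that Plancherel controls. I do not expect difficulty in the inequalities themselves; the care required is purely in the Fourier-theoretic bookkeeping between $G$ and $H$, so that the relation $|G|=|N||H|$ delivers the factor $1/|N|$ and not some other normalization.
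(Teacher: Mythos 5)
Your proposal is correct and follows essentially the same route as the paper: the same split of the Parseval sum into type I and type II representations, with the type I block identified as $\frac{1}{|N|}\dot{\cM_k}$ (you via pushforward functions on $H$ and Parseval there, the paper via column orthogonality of the lifted characters --- the same computation) and the type II tail controlled by a Gowers-style Cauchy--Schwarz that spends $d_\rho \geq D$ on $k-1$ factors. The only cosmetic difference is in the bookkeeping of the tail estimate, where you use the operator-norm ideal property $\|A_1\cdots A_k\|_{\HS}\leq \|A_k\|_{\HS}\prod_{i=1}^{k-1}\|A_i\|_{\mathrm{op}}$ while the paper uses full Hilbert--Schmidt submultiplicativity together with the redistribution $d_{\rho}\leq d_\rho^k/D^{k-1}$, both yielding the identical final bound.
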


Assume that $G'$ is $D$-quasirandom. Taking $k=2$ and $G=G'\rtimes \{1\}$ in the above theorem, one has $\dot{\cM_2}=|X_0||X_1||X_2|$. Then Theorem \ref{thm_Gowers} follows immediately from Theorem \ref{thm_mixing}.

For any subset $X\subseteq G$, denote by $1_X$ the characteristic function of $X$, i.e.,
\[
1_X(x) = 
\begin{cases}
1,\quad &\text{if }x\in X,\\
0,\quad &\text{if }x\notin X.
\end{cases}
\]
The next theorem shows upper and lower bounds for convolutions. 

\begin{theorem} \label{thm_l_2}
Let $G=N\rtimes_\varphi H$ be defined as previous, which is $D$-quasirandom on $N$. For any integer $k\geq 2$ and subsets $X_1,X_2,\ldots, X_k$ of $G$, we have 
\[
\frac{|H|\dot{\cN_k}}{|G|^{2k}}\leq \|1_{X_1}\ast 1_{X_2}\ast \ldots \ast 1_{X_k}\|_2^2 \leq \frac{|H|\dot{\cN_k}}{|G|^{2k}}+\frac{|X_1||X_2|\ldots |X_k|}{D^{k-1}|G|^k},
\]
where
\[
\dot{\cN_k} = \#\{(x_1,x_2,\ldots,x_k,y_1,y_2,\ldots,y_k)\in (X_1\times X_2\times\ldots\times X_k)^2:\, \dot{x_1}\dot{x_2}\cdots\dot{x_k}=\dot{y_1}\dot{y_2}\cdots\dot{y_k}\}.
\]
\end{theorem}

For any $X,Y\subseteq G$, the energies $E(X, Y)$ and $\dot{E}(X,Y)$ are defined by
\[
E(X,Y):=\#\{(x_1,x_2,y_1,y_2)\in X^2\times Y^2:\, x_1y_1=x_2y_2\},
\] 
and
\[
\dot{E}(X,Y) := \#\{(x_1,x_2,y_1,y_2)\in X^2\times Y^2:\, \dot{x_1}\dot{y_1}=\dot{x_2}\dot{y_2}\}.
\]
The next corollary follows directly from Theorem \ref{thm_l_2}.

\begin{corollary}
Let $G=N\rtimes_\varphi H$ be defined as previous, which is $D$-quasirandom on $N$. For any $X,Y\subseteq G$, we have 
\[
\frac{|H|}{|G|} \dot{E}(X,Y)\leq E(X,Y)\leq \frac{|H|}{|G|}\dot{E}(X,Y)+ \frac{|G|}{D}|X||Y|.
\]
\end{corollary}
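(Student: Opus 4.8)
The plan is to specialize Theorem~\ref{thm_l_2} to $k=2$ with $X_1=X$ and $X_2=Y$, and then to rewrite the two quantities appearing there in terms of the energies $E(X,Y)$ and $\dot{E}(X,Y)$.

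First I would unfold $\|1_X\ast 1_Y\|_2^2$. By the definition of convolution, the value $(1_X\ast 1_Y)(g)$ counts, up to the normalizing constant fixed in the paper, the pairs $(x,y)\in X\times Y$ with $xy=g$. Squaring and summing over $g\in G$ then counts the quadruples $(x_1,y_1,x_2,y_2)\in X^2\times Y^2$ with $x_1y_1=g=x_2y_2$, so that $\|1_X\ast 1_Y\|_2^2$ equals $E(X,Y)$ up to that same constant.

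Next I would identify the combinatorial term. Putting $k=2$, $X_1=X$ and $X_2=Y$ into the definition of $\dot{\cN_k}$ gives
\[
\dot{\cN_2}=\#\{(x_1,x_2,y_1,y_2):\,x_1,y_1\in X,\ x_2,y_2\in Y,\ \dot{x_1}\dot{x_2}=\dot{y_1}\dot{y_2}\}.
\]
Swapping the labels of the two middle entries $x_2\leftrightarrow y_1$ matches this with
\[
\dot{E}(X,Y)=\#\{(x_1,x_2,y_1,y_2)\in X^2\times Y^2:\,\dot{x_1}\dot{y_1}=\dot{x_2}\dot{y_2}\},
\]
so $\dot{\cN_2}=\dot{E}(X,Y)$.

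Finally I would substitute both identifications into the $k=2$ instance of Theorem~\ref{thm_l_2} and divide through by the common normalizing factor relating $\|1_X\ast 1_Y\|_2^2$ to $E(X,Y)$; this rescaling converts the powers of $|G|$ appearing in Theorem~\ref{thm_l_2} into the stated coefficients $|H|/|G|$ and $|G|/D$. Every step is a direct unfolding of definitions, so there is no genuine obstacle here; the only point demanding care is to apply the normalization conventions for $\ast$ and for $\|\cdot\|_2$ uniformly across all three sides of the inequality, so that the same scalar relates the convolution norm to $E(X,Y)$ throughout.
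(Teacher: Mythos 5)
Your proposal is correct and coincides with the paper's (implicit) proof: the paper simply asserts that the corollary follows directly from Theorem~\ref{thm_l_2}, and your derivation --- taking $k=2$, noting $\|1_X\ast 1_Y\|_2^2 = E(X,Y)/|G|^3$ under the paper's normalized convolution, identifying $\dot{\cN_2}=\dot{E}(X,Y)$ by relabeling, and multiplying the resulting inequality through by $|G|^3$ --- is exactly that direct deduction, with the scalings $|G|^3\cdot |H|/|G|^4=|H|/|G|$ and $|G|^3/(D|G|^2)=|G|/D$ checking out.
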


The next theorem provides an estimate of possible product growth.

\begin{theorem} \label{thm_lowbound_XY}
Let $G=N\rtimes_\varphi H$ and $\dot{\cN_k}$ be defined as previous. Suppose that $G$ is $D$-quasirandom on $N$. For an integer $k\geq 2$ and subsets $X_1,X_2,\ldots, X_k$ of $G$, we have 
\[
|X_1X_2\cdots X_k| \geq  \frac{1}{2}\min \left\{\, \frac{|N|}{\dot{\cN_k}}|X_1|^2|X_2|^2\cdots |X_k|^2,\,\,\frac{D^{k-1}}{|G|^{k-1}}|X_1||X_2|\cdots |X_k|\,\right\}.
\]
%where 
%\[
%\dot{\cN_k} = \#\{(x_1,x_2,\ldots,x_k,y_1,y_2,\ldots,y_k)\in (X_1\times X_2\times\ldots\times X_k)^2:\, \dot{x_1}\dot{x_2}\ldots\dot{x_k}=\dot{y_1}\dot{y_2}\ldots\dot{y_k}\}.
%\]
\end{theorem}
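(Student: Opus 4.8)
The plan is to deduce the product-growth bound from the $L^2$ estimate of Theorem \ref{thm_l_2} via a Cauchy--Schwarz argument. Set
\[
f := 1_{X_1}\ast 1_{X_2}\ast\cdots\ast 1_{X_k}.
\]
The starting point is that $f$ is nonnegative and $f(x)>0$ precisely when $x=x_1x_2\cdots x_k$ for some $x_i\in X_i$; hence the support of $f$ is exactly the product set $X_1X_2\cdots X_k$. All the analytic content needed below has already been packaged into the upper bound of Theorem \ref{thm_l_2}, so the remaining task is essentially algebraic.

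First I would record the two elementary quantities attached to $f$. In the normalization used to state Theorem \ref{thm_l_2}, summing over all $x$ counts each tuple once, giving $\sum_x f(x)=|X_1|\,|X_2|\cdots|X_k|/|G|^{k-1}$, and the (normalized) $L^2$ norm satisfies $\sum_x f(x)^2=|G|\,\|f\|_2^2$. Applying Cauchy--Schwarz to $\sum_x f(x)=\sum_x f(x)\,1_{\mathrm{supp}(f)}(x)$ yields
\[
\Big(\sum_x f(x)\Big)^2 \le |X_1X_2\cdots X_k|\cdot\sum_x f(x)^2 = |X_1X_2\cdots X_k|\cdot |G|\,\|f\|_2^2,
\]
and therefore
\[
|X_1X_2\cdots X_k| \ge \frac{\big(\sum_x f(x)\big)^2}{|G|\,\|f\|_2^2}.
\]

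Next I would substitute the upper bound of Theorem \ref{thm_l_2}, writing $\|f\|_2^2\le A+B$ with $A=|H|\dot{\cN_k}/|G|^{2k}$ and $B=|X_1|\cdots|X_k|/(D^{k-1}|G|^k)$. The key device is the elementary inequality $1/(A+B)\ge\tfrac12\min\{1/A,1/B\}$, which follows from $A+B\le 2\max\{A,B\}$. This splits the bound into two cases and reduces everything to two routine simplifications of $\big(\sum_x f(x)\big)^2/|G|=|X_1|^2\cdots|X_k|^2/|G|^{2k-1}$: dividing by $A$ and using $|G|/|H|=|N|$ produces the term $\frac{|N|}{\dot{\cN_k}}|X_1|^2\cdots|X_k|^2$, while dividing by $B$ produces $\frac{D^{k-1}}{|G|^{k-1}}|X_1|\cdots|X_k|$.

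I do not expect a serious obstacle: this is the standard ``spectral gap $\Rightarrow$ product growth'' deduction, with the entire Fourier-analytic input already isolated in Theorem \ref{thm_l_2}. The only points demanding care are bookkeeping ones, namely tracking the normalizing factors of the convolution and of $\|\cdot\|_2$ consistently so that the powers of $|G|$ cancel correctly, and checking that the identity $|G|/|H|=|N|$ is exactly what turns the first case into the form $\frac{|N|}{\dot{\cN_k}}|X_1|^2\cdots|X_k|^2$ stated in the theorem.
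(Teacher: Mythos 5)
Your proposal is correct and takes essentially the same route as the paper: your inline Cauchy--Schwarz bound on the support of $f$ is exactly the paper's Lemma \ref{lem}, your computation $\sum_x f(x)=|X_1|\cdots|X_k|/|G|^{k-1}$ is its $l^1$-norm identity \eqref{eq_l1}, and the split $1/(A+B)\ge\tfrac12\min\{1/A,1/B\}$ together with $|G|/|H|=|N|$ is precisely how the paper extracts the stated bound from Theorem \ref{thm_l_2}. All the bookkeeping of $|G|$-powers in your sketch checks out, so there is nothing to fix.
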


\subsection{Applications} \label{sec_rm}
Let $q$ be an odd prime power. We now take $N_0=\bF_q^2$ and $H_0=SO_2(\bF_q)$. Define the group homomorphism $\varphi:\, H_0\rightarrow \textsf{Aut}(N_0)$ by $\varphi(h):=\varphi_h$ with $\varphi_h(x) = hx$. Take the semi-direct product $(G_0,\cdot)= (N_0,+) \rtimes_\varphi (H_0,\cdot)$. The set on the right-hand side is
\[
\{(z,h):\, z\in N_0,\, h\in H_0\},
\]
with the group law given by 
\[
(z_1,h_1)(z_2,h_2) = (z_1+h_1z_2,\, h_1h_2).
\]
%The operation on $G\simeq N\rtimes_\varphi H$ is given by 
%\[
%(z_1,h_1)(z_2,h_2) = (z_1+h_1z_2,\, h_1h_2).
%\]
%The identity element is $(0,1)$, where $0$ is the zero vector in $N$ and $1$ is the identity matrix in $H$, The inverse is given by 
%\[
%(z,h)^{-1}= (-h^{-1}z, h^{-1}).
%\]
%A conjugation of elements is given by
%\[
%(w,k)(z,h)(w,k)^{-1} = (w+kz,kh)(-k^{-1}w,k^{-1}) = (kz+(1-khk^{-1})w,khk^{-1}). 
%\]

%Denote $G_i = N\rtimes_\varphi H_i$ $(i=0,1,2)$. It is clear that the group $G_2$ is the set of all affine transformations on $\bF_q^2$. The group $G_1$ contains affine maps that are area--triangle--invariant. The group $G_2$ contains the rigid-motions that preserve distances between pairs of points. 

The group $G_0$ consists of the rigid-motions that preserve `distances' between pairs of points. For any $(z,h)\in G_0$, this affine transformation is given by $(z,h):\, N_0\rightarrow N_0$ with
\[
(z,h)x =z+hx.
\]
%We now focus on applications in the group of rigid-motions. 

The group $H_0$ is given explicitly by
\[
H_0=\left\{\left[\begin{matrix}
a &-b\\
b &a
\end{matrix}\right]:\, a^2+b^2=1\right\}.
\]
It is a cyclic group of order $q-\varepsilon_q$, where 
\[
\varepsilon_q = \left(\frac{-1}{q}\right)=
\begin{cases}
1,\quad &\text{if }q\equiv 1\, (\mod 4),\\
-1,\quad &\text{if }q\equiv 3\, (\mod 4).
\end{cases}
\]
%For simplicity, let us write $Q=q-\varepsilon_q=|H_0|$ and $Q'= q+\varepsilon_q$. Then $QQ'=q^2-1$.
In the next result, we show that $G_0$ is $(q-\varepsilon_q)$-quasirandom on $N_0$.
\begin{theorem}\label{thm_repn_dim_H0}
The group $G_0$ has $q-\varepsilon_q$ type I irreducible complex representations of degree $1$, and has $q+\varepsilon_q$ type II irreducible complex representations of degree $q-\varepsilon_q$.
\end{theorem}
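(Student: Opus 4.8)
The plan is to classify all irreducible complex representations of $G_0=N_0\rtimes_\varphi H_0$ by the method of little groups (Mackey--Wigner, as in Serre's book on linear representations), which applies precisely because $N_0=\bF_q^2$ is abelian. Its Pontryagin dual $\widehat{N_0}$ is again isomorphic to $\bF_q^2$: fixing a nontrivial additive character $\psi$ of $\bF_q$, every character of $N_0$ has the form $\chi_\xi(z)=\psi(\xi\cdot z)$ for a unique $\xi\in\bF_q^2$. The group $H_0$ acts on $\widehat{N_0}$ by $(h\cdot\chi)(z)=\chi(\varphi_{h^{-1}}(z))$, and a short computation using $\varphi_h(z)=hz$ together with $h^{-T}=h$ for $h\in SO_2(\bF_q)$ gives $h\cdot\chi_\xi=\chi_{h\xi}$. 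Thus the $H_0$-action on $\widehat{N_0}$ is identified with the natural linear action of $SO_2(\bF_q)$ on $\bF_q^2$, and the whole problem reduces to understanding its orbits.

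First I would determine these orbits. The trivial character $\chi_0$ (i.e. $\xi=0$) is a fixed point, with stabilizer all of $H_0$. For $\xi\neq 0$ I claim the action is free: if $h\xi=\xi$ with $\xi\neq 0$, then $1$ is an eigenvalue of $h=\begin{pmatrix} a & -b\\ b & a\end{pmatrix}$, whose characteristic polynomial is $\lambda^2-2a\lambda+1$; the root $\lambda=1$ forces $a=1$, hence $b=0$ and $h=1_H$. Consequently every orbit of nonzero characters has size $|H_0|=q-\varepsilon_q$, and the number of such orbits is $(q^2-1)/(q-\varepsilon_q)=q+\varepsilon_q$, using $\varepsilon_q^2=1$.

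Next I would feed this orbit data into the method of little groups. For the fixed point $\chi_0$ the little group is $H_{\chi_0}=H_0$, and the associated irreducibles are induced from $N_0\rtimes H_0=G_0$ itself; these are exactly the representations trivial on $\widetilde{N_0}$, i.e. the Type I representations lifted from $H_0$. Since $H_0$ is cyclic of order $q-\varepsilon_q$, these are $q-\varepsilon_q$ characters of degree $1$. For each of the $q+\varepsilon_q$ nonzero orbits the little group is trivial, so there is a single associated irreducible, obtained by inducing the one-dimensional character $\chi_\xi$ of $\widetilde{N_0}$ up to $G_0$; its degree equals the index $[G_0:\widetilde{N_0}]=|H_0|=q-\varepsilon_q$, and it is nontrivial on $\widetilde{N_0}$, hence of Type II. By Mackey's theorem these representations are irreducible, pairwise inequivalent, and exhaust all irreducible representations of $G_0$, yielding exactly $q+\varepsilon_q$ Type II irreducibles of degree $q-\varepsilon_q$.

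The step I expect to require the most care is pinning down the orbit structure uniformly in $q\bmod 4$; the freeness argument is the crux, since it simultaneously shows that every nonzero orbit attains the full size $q-\varepsilon_q$ and, via orbit counting, produces the exact count $q+\varepsilon_q$ of Type II representations. As a final consistency check I would verify $\sum_\rho(\dim\rho)^2=(q-\varepsilon_q)\cdot 1+(q+\varepsilon_q)(q-\varepsilon_q)^2=q^2(q-\varepsilon_q)=|G_0|$, confirming that no irreducible has been overlooked.
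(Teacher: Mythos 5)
Your proof is correct, but it takes a genuinely different route from the paper. The paper never constructs the type II representations: it counts conjugacy classes of $G_0$ directly (the singleton $[(0,1)]$, the $q+\varepsilon_q$ classes $[(z,1)]$ of size $q-\varepsilon_q$, and the $q-\varepsilon_q-1$ classes $[(0,h)]=\{(y,h):y\in N_0\}$ for $h\neq 1$), concludes that there are exactly $q+\varepsilon_q$ type II irreducibles, bounds each degree above by $[G_0:\widetilde{N_0}]=|H_0|=q-\varepsilon_q$ using that $\widetilde{N_0}$ is abelian, and then forces every degree to equal $q-\varepsilon_q$ via the sum-of-squares identity $\sum_j d_{\rho_j'}^2=|G_0|-(q-\varepsilon_q)=(q-\varepsilon_q)^2(q+\varepsilon_q)$. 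You instead invoke the Mackey--Wigner little-group classification for $N_0\rtimes H_0$ with $N_0$ abelian, realizing each type II representation explicitly as $\mathrm{Ind}_{\widetilde{N_0}}^{G_0}\chi_\xi$ for $\xi$ in one of the $q+\varepsilon_q$ free orbits of $H_0$ on $\widehat{N_0}\setminus\{\chi_0\}$. Both arguments pivot on the same key fact in equivalent guises: your eigenvalue computation showing $h\xi=\xi$, $\xi\neq 0$ implies $h=1$ is exactly the paper's Lemma 4.1 that $1-h$ is invertible for $h\neq 1$ (note both use $q$ odd, since $\det(1-h)=2(1-a)$). Your approach requires the extra step of identifying the dual action with the natural action, which you correctly handle via $h^{-T}=h$ for $h\in SO_2(\bF_q)$ — this self-duality is not automatic and would need care in other groups. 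What each buys: the paper's argument is more elementary, using only character-theoretic counting with no induced-representation machinery; yours yields strictly more information (explicit models of the type II representations, from which characters and matrix coefficients can be computed) and transfers verbatim to other semidirect products with abelian normal subgroup and fixed-point-free action, whereas the paper's equality-forcing trick depends on the fortunate coincidence that the maximal possible degree is attained by every type II representation.
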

\begin{remark}\label{rm2}
One can similarly consider the group $G_1=\bF_q^2 \rtimes_\varphi SL_2(\bF_q)$, which is composed of affine maps that are area--triangle--invariant. It is not hard to prove that $G_1$ is $(q-1)/2$-quasirandom by using Theorem \ref{thm_repn_dim_H0}. Indeed, all complex representations of $G_1$ of type II has degree no less than $q-\varepsilon_q$, and all non-trivial representations of $G_1$ of type I has dimension no less than $(q-1)/2$.
\end{remark}

%\begin{proposition}\label{thm_repn_dim_H1}
%Any non-trivial complex representaion of $G_1$ has dimension no fewer than $(q-\varepsilon_q)/2$.
%\end{proposition}
%If we apply Theorems \ref{thm_mixing} and \ref{thm_lowbound_XY} on the group $G_1$, then we can get a product growth of large subsets, which is similar to the earlier results in $SL_2(\mathbb{F}_q)$. Therefore, in this paper, we mainly focus on applications in the setting of $G_0$, which leads to more interesting theorems. 

Note that $|G_0| = (q-\varepsilon_q)q^2$. Applying Theorems \ref{thm_mixing} and \ref{thm_lowbound_XY} on the group $G_0$, and combining Theorem \ref{thm_repn_dim_H0}, we obtain the following two theorems immediately. 

\begin{theorem} \label{thm_mixing_G0}
For any $X,Y,Z\subseteq G_0$, we have 
\begin{equation}\label{eq1new}
\left|\#\{(x,y,z)\in X\times Y\times Z:\, xy=z\} -  \frac{\dot{\cM_2}}{q^2}\right|\leq q\sqrt{|X||Y||Z|},
\end{equation}
where 
\[
\dot{\cM_2}=   \#\{(x,y,z)\in X\times Y\times Z:\, \dot{x}\dot{y}=\dot{z}\}.
\]
\end{theorem}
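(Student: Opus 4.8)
The plan is to obtain Theorem~\ref{thm_mixing_G0} as a direct specialization of Theorem~\ref{thm_mixing} to the group $G_0 = N_0 \rtimes_\varphi H_0$, taking $k = 2$ and feeding in the quasirandomness parameter supplied by Theorem~\ref{thm_repn_dim_H0}. Since both ingredients are already established, the task is essentially bookkeeping: matching the sets, substituting the numerical data for $G_0$, and checking that the error term simplifies to the stated form.

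First I would align the two counting problems. In Theorem~\ref{thm_mixing} the tuples $(x_0, x_1, x_2)$ are counted subject to $x_1 x_2 = x_0$, whereas here the target is $(x, y, z)$ with $xy = z$. I would therefore set $X_0 = Z$, $X_1 = X$, $X_2 = Y$ and identify $x_0 = z$, $x_1 = x$, $x_2 = y$. Under this matching the quantity $\cM_2$ of Theorem~\ref{thm_mixing} becomes $\#\{(x,y,z) \in X \times Y \times Z : xy = z\}$, and $\dot{\cM_2}$ becomes exactly the quantity $\dot{\cM_2} = \#\{(x,y,z): \dot{x}\dot{y} = \dot{z}\}$ defined in the statement, so the two main terms coincide verbatim.

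Next I would record the numerical inputs for $G_0$. By Theorem~\ref{thm_repn_dim_H0} every type~II irreducible representation of $G_0$ has degree $q - \varepsilon_q$, so $G_0$ is $(q - \varepsilon_q)$-quasirandom on $N_0$; hence I may take $D = q - \varepsilon_q$. Since $N_0 = \bF_q^2$ we have $|N_0| = q^2$, and since $H_0 = SO_2(\bF_q)$ is cyclic of order $q - \varepsilon_q$ we have $|G_0| = (q - \varepsilon_q)q^2$. Substituting $|N| = q^2$ into the centering term of Theorem~\ref{thm_mixing} gives $\tfrac{1}{|N|}\dot{\cM_2} = \dot{\cM_2}/q^2$, which is precisely the centering term appearing in \eqref{eq1new}.

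Finally I would simplify the error bound. With $k = 2$ the right-hand side of Theorem~\ref{thm_mixing} reads $\sqrt{|G_0|\,|X||Y||Z|/D}$; plugging in $|G_0| = (q - \varepsilon_q)q^2$ and $D = q - \varepsilon_q$, the factor $q - \varepsilon_q$ cancels, leaving $\sqrt{q^2\,|X||Y||Z|} = q\sqrt{|X||Y||Z|}$, as claimed. There is no genuine obstacle here, since all the analytic content is carried by Theorems~\ref{thm_mixing} and \ref{thm_repn_dim_H0}; the only point deserving attention is the exact cancellation of $q - \varepsilon_q$ between $|G_0|$ and $D$, which is exactly what produces the clean constant $q$ in the final estimate.
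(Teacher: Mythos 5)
Your proposal is correct and coincides with the paper's own argument: the authors obtain Theorem~\ref{thm_mixing_G0} exactly by specializing Theorem~\ref{thm_mixing} with $k=2$ to $G_0$, taking $D=q-\varepsilon_q$ from Theorem~\ref{thm_repn_dim_H0} and $|N_0|=q^2$, $|G_0|=(q-\varepsilon_q)q^2$, so that the factor $q-\varepsilon_q$ cancels in the error term just as you describe. Your bookkeeping (matching $X_0=Z$, $X_1=X$, $X_2=Y$ and the centering term $\dot{\cM_2}/q^2$) is accurate, so nothing further is needed.
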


\begin{theorem} \label{thm_lowXY_G0}
For $X,Y\subseteq G_0$, we have  
\begin{equation}\label{eq2new}
|XY| \gg \min \left\{\frac{q^2|X|^2|Y|^2}{\#\{(g_1,g_2,h_1,h_2)\in X^2\times Y^2:\, \dot{g_1}\dot{h_1}=\dot{g_2}\dot{h_2}\}},\, \frac{|X||Y|}{q^2}\right\}. 
\end{equation}
\end{theorem}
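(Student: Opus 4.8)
The plan is to specialize Theorem~\ref{thm_lowbound_XY} to $k=2$ and the rigid-motion group $G_0$, reading off the quasirandomness parameter from Theorem~\ref{thm_repn_dim_H0}. This is a direct substitution, so I expect no genuine obstacle; the only point demanding care is a bookkeeping step matching the definition of $\dot{\cN_2}$ with the count written in \eqref{eq2new}.

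First I would record the numerical data for $G_0$. Since $N_0=\bF_q^2$ we have $|N_0|=q^2$, and since $H_0=SO_2(\bF_q)$ is cyclic of order $q-\varepsilon_q$ we get $|G_0|=|N_0||H_0|=(q-\varepsilon_q)q^2$. By Theorem~\ref{thm_repn_dim_H0}, every type~II irreducible representation of $G_0$ has degree $q-\varepsilon_q$, so $G_0$ is $D$-quasirandom on $N_0$ with $D=q-\varepsilon_q$. In particular $D/|G_0|=(q-\varepsilon_q)/\bigl((q-\varepsilon_q)q^2\bigr)=1/q^2$, while $|N_0|=q^2$.

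Next I would apply Theorem~\ref{thm_lowbound_XY} with $k=2$, $X_1=X$, $X_2=Y$, and $G=G_0$. Substituting the values above, the right-hand side becomes
\[
\tfrac12\min\left\{\frac{q^2}{\dot{\cN_2}}|X|^2|Y|^2,\ \frac{1}{q^2}|X||Y|\right\},
\]
so it remains only to identify $\dot{\cN_2}$ with the count appearing in the denominator of \eqref{eq2new}. By definition, $\dot{\cN_2}=\#\{(x_1,x_2,y_1,y_2)\in(X\times Y)^2:\ \dot{x_1}\dot{x_2}=\dot{y_1}\dot{y_2}\}$, where $x_1,y_1\in X$ and $x_2,y_2\in Y$. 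Renaming the coordinates via $(g_1,h_1,g_2,h_2)=(x_1,x_2,y_1,y_2)$ shows this equals $\#\{(g_1,g_2,h_1,h_2)\in X^2\times Y^2:\ \dot{g_1}\dot{h_1}=\dot{g_2}\dot{h_2}\}$, exactly the denominator in \eqref{eq2new}.

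Finally, absorbing the absolute constant $1/2$ into the $\gg$ notation yields \eqref{eq2new}. The one place requiring attention, rather than being a real difficulty, is the renaming in the previous paragraph: the definition of $\dot{\cN_2}$ fixes which coordinates run over $X_1=X$ and which over $X_2=Y$, and one must transcribe this faithfully into the more symmetric-looking denominator of \eqref{eq2new}. Everything else is immediate from Theorems~\ref{thm_lowbound_XY} and~\ref{thm_repn_dim_H0}.
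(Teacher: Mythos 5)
Your proposal is correct and is exactly the paper's argument: the authors obtain Theorem~\ref{thm_lowXY_G0} by specializing Theorem~\ref{thm_lowbound_XY} to $k=2$ on $G_0$, with $D=q-\varepsilon_q$ supplied by Theorem~\ref{thm_repn_dim_H0}, so that $|N_0|=q^2$ and $D/|G_0|=1/q^2$ yield \eqref{eq2new}. Your identification of $\dot{\cN_2}$ with the denominator in \eqref{eq2new}, including the coordinate renaming, is faithful to the definitions and matches the paper's (implicit) bookkeeping.
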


When the set $X$ is close to a large subgroup of $G$, the error term $q|X|^{3/2}$ in \eqref{eq1new} and the original main term $q^{-1}|X|^3$ in \eqref{one} may be both smaller than the number of triples $(x,y,z)\in X^3$ taken into consideration. So the refinement of the main term in \eqref{eq1new} is necessary. Moreover, the second term $q^{-2}|X|^2$ on the right-hand side of \eqref{eq2new} may be larger than $|XX|$. So the replacement of the first term on the right-hand side of \eqref{eq_prodgrowth} by that of \eqref{eq2new} is also necessary. The details are given in the following example, which also shows the sharpness of Theorem \ref{thm_lowXY_G0}.

%On the sharpness of these two theorems, we provide the following example.
\begin{example} \label{ex1}
Let $\gamma$ be a generator of the cyclic group $H_0$. Assume that $|H_0|=q-\varepsilon_q=kl$, $a=\gamma^k$, and let 
\[
\cX=\{(t,a^j):\, t\in N_0,\, 0\leq j\leq l-1\}.
\]
It is not hard to see that $\cX$ is a subgroup of $G$. So, 
\[
|\cX\cX|=|\cX|= lq^2 .
\]
and
\[
\#\{(x,y,z)\in \cX^3:\, xy=z\} = |\cX|^2 = l^2q^4.
\]
The error term in \eqref{eq1new} is $q|\cX|^{3/2}=l^{3/2}q^4 \leq l^2q^4$, and the original main term in \eqref{one} is about $q^{-3}|\cX|^3 = l^3 q^3\leq l^2q^4$. And the second term on the right-hand side of \eqref{eq2new} is $q^{-2}|\cX|^2 = l^2q^2\geq lq^2$.

To calculate the quantities 
\[
\dot{\cM_2}= \#\{(x,y,z)\in \cX^3:\, \dot{x}\dot{y}=\dot{z}\}~~\mbox{and}~~ \dot{\cN_2}=\#\{(g_1,g_2,h_1,h_2)\in \cX^4:\, \dot{g_1}\dot{h_1}=\dot{g_2}\dot{h_2}\},
\]
we see that for given $x, y\in \cX$, one has $\dot{x}\dot{y}\in \{a^j:\, 0\leq j\leq l-1\}$ and $
\#\{z\in X:\, \dot{z}=\dot{x}\dot{y}\} = |N_0|.
$
It follows that $\dot{\cM_2}=|\cX|^2|N_0|$. Similarly, $\dot{\cN_2}=|\cX|^3|N_0|$.
Thus, Theorems \ref{thm_mixing_G0} and \ref{thm_lowXY_G0} show that 
\[
|l^2q^4 - l^2q^4| \leq l^{3/2}q^4~~\mbox{ and }~~ lq^2 \,\gg \, \min\{ l q^2,\, l^2q^2\}.
\]
In particular, Theorem \ref{thm_lowXY_G0} is optimal in this example.
\end{example} 
\begin{remark}
If the reader is interested in analogs of Theorems \ref{thm_mixing_G0} and \ref{thm_lowXY_G0} in the setting of the group $G_1=\bF_q^2\rtimes_\varphi SL_2(\bF_q)$, then similar results can be derived in the same way by using Remark \ref{rm2}. 
\end{remark}

Denote by $\rho_0,\rho_1,\ldots,\rho_{q-\varepsilon_q-1}$ the irreducible representations of $G_0$ of type I. When $X=Y$, the next theorem offers a lower bound in terms of the Fourier bias of the set $X$. 

\begin{theorem} \label{thm_bias}
For $X\subseteq G_0$, assume that 
\[
|\widehat{1_X}(\rho_r)| \leq M
\]
for all but $k$ indices $r\in \{0,1,2,\ldots, q-\varepsilon_q-1\}$. Then 
\[
|XX| \gg  \min \left\{\frac{q^3}{k}, \,\frac{|X|^4}{q^{10}M^4},\, \frac{|X|^2}{q^2}\right\}. 
\]
When $M=0$, the second term on the right-hand side of above formula can be omitted.
\end{theorem}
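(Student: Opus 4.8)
The plan is to derive the theorem from Theorem \ref{thm_lowXY_G0} by setting $Y=X$ and then controlling the multiplicative energy $\dot E(X,X)$ through the Fourier coefficients attached to the type I representations. Specializing $Y=X$ in \eqref{eq2new} gives at once
\[
|XX|\gg\min\left\{\frac{q^2|X|^4}{\dot E(X,X)},\,\frac{|X|^2}{q^2}\right\},
\]
so the third term in the claimed bound is already present, and everything reduces to showing that $q^2|X|^4/\dot E(X,X)$ dominates $\min\{q^3/k,\,|X|^4/(q^{10}M^4)\}$, i.e. to an upper bound on $\dot E(X,X)$.

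Next I would rewrite $\dot E(X,X)$ purely on the cyclic group $H_0$. Let $f\colon H_0\to\Z$ be the pushforward of $1_X$, namely $f(h)=\#\{x\in X:\dot x=h\}$. Writing $r(s)=\#\{(g,h)\in X^2:\dot g\dot h=s\}=(f\ast f)(s)$, one has
\[
\dot E(X,X)=\sum_{s\in H_0}r(s)^2=\|f\ast f\|_2^2.
\]
Since $H_0$ is cyclic, Parseval converts this into $\tfrac{1}{|H_0|}\sum_r|\hat f(\chi_r)|^4$, where $\chi_r$ is the character of $H_0$ underlying the type I representation $\rho_r$. Because $\rho_r((z,h))=\chi_r(h)$ depends only on the $H$-coordinate, $\hat f(\chi_r)=\sum_{x\in X}\chi_r(\dot x)$ is exactly $\widehat{1_X}(\rho_r)$ up to the fixed normalization used throughout the paper; tracking that factor (a power of $|G_0|$ and $|H_0|$, hence of $q$) yields an identity of the form $\dot E(X,X)=c_q\sum_r|\widehat{1_X}(\rho_r)|^4$.

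I would then split $\{0,\dots,q-\varepsilon_q-1\}$ into the $\leq k$ exceptional indices and the remaining good ones. On a good index $|\widehat{1_X}(\rho_r)|^4\leq M^4$, so the good indices contribute at most $|H_0|M^4$ to $\sum_r|\widehat{1_X}(\rho_r)|^4$; on each exceptional index the trivial bound $|\widehat{1_X}(\rho_r)|\leq|X|/|G_0|$ gives a contribution of at most $k|X|^4/|G_0|^4$. Substituting back produces
\[
\dot E(X,X)\ll\frac{k|X|^4}{|H_0|}+|G_0|^4M^4,
\]
and then the elementary inequality $1/(a+b)\geq\tfrac12\min\{1/a,1/b\}$ turns $q^2|X|^4/\dot E(X,X)$ into $\tfrac12\min\{q^2|H_0|/k,\ q^2|X|^4/(|G_0|^4M^4)\}$, which is $\gg\min\{q^3/k,\,|X|^4/(q^{10}M^4)\}$ once we insert $|H_0|\asymp q$ and $|G_0|\asymp q^3$. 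When $M=0$ the good indices contribute nothing, the middle term vanishes, and only $\min\{q^3/k,\,|X|^2/q^2\}$ survives, exactly as claimed.

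The main obstacle is bookkeeping rather than conceptual: obtaining the precise exponent $q^{10}$ requires carefully matching the normalization of the non-abelian transform $\widehat{1_X}(\rho_r)$ fixed elsewhere in the paper with the plain character sum $\sum_{x\in X}\chi_r(\dot x)$ arising in the Parseval identity on $H_0$, and then propagating the constant $c_q\asymp|G_0|^4/|H_0|$ correctly through the split. A secondary point to verify is that $|\widehat{1_X}(\rho_r)|\leq|X|/|G_0|$ is indeed the sharp trivial bound under that normalization, since it is precisely this estimate on the exceptional indices that produces the $q^3/k$ term.
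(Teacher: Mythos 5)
Your proof is correct, and it reaches the theorem by a mildly but genuinely different route from the paper. The paper does not invoke Theorem \ref{thm_lowXY_G0} at all: it returns to Lemma \ref{lem} together with the $l^1$ computation \eqref{eq_l1}, and splits the nonabelian Parseval expansion of $\|1_X\ast 1_X\|_2^2$ over $\widehat{G_0}$ directly into three pieces --- the $k$ exceptional type I indices, bounded by $k|X|^4/|G_0|^4$ via the same trivial bound $|\widehat{1_X}(\rho_r)|\leq \|1_X\|_1=|X|/|G_0|$ that you identify as the crux; the good type I indices, bounded by $(Q-k)M^4$; and the type II part, bounded by $\frac{1}{D}\frac{|X|^2}{|G_0|^2}$ using $d_{\rho'_j}\geq D=q-\varepsilon_q$ from Theorem \ref{thm_repn_dim_H0}. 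You instead quote \eqref{eq2new} as a black box and convert the type I hypothesis into an upper bound on $\dot{E}(X,X)$ via the pushforward $f$ and abelian Parseval on the cyclic group $H_0$. These are the same computation in different clothing: the type I portion of the paper's Parseval sum equals $\frac{|H_0|}{|G_0|^4}\dot{E}(X,X)$ exactly (this is the $k=2$ case of Theorem \ref{thm_l_2}, where the type I contribution is an identity, not an estimate), so your formula $\dot{E}(X,X)=\frac{|G_0|^4}{|H_0|}\sum_r |\widehat{1_X}(\rho_r)|^4$ is precisely the abelian re-expansion of that term, and your resulting constants --- $q^2|H_0|/k \asymp q^3/k$ and $q^2|X|^4/(|G_0|^4 M^4)\asymp |X|^4/(q^{10}M^4)$, with the $M=0$ degeneration --- coincide with the paper's. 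What your version buys is modularity: Theorem \ref{thm_bias} becomes a corollary of the already-stated Theorem \ref{thm_lowXY_G0} plus a purely abelian Fourier lemma on $H_0$, making the energy interpretation of the type I mass explicit; what the paper's direct split buys is self-containedness and freedom from tracking the normalization factor $|G_0|^4/|H_0|$, which you correctly flag as the only delicate bookkeeping and handle accurately.
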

Example \ref{ex1} also shows that this theorem is sharp. The details are given in Example \ref{ex2}.

In the next theorem, we prove that if the set $X$ satisfies certain properties, then the product $XX$ grows exponentially.

For two points $x=(x_1,x_2)$ and $y=(y_1,y_2)$ in $\bF_q^2$, define
\[
\|x-y\| = (x_1-y_1)^2+(x_2-y_2)^2.
\]
We say that $(x,y)\in \bF_q^2$ is a line segment of length $t$ if $\|x-y\|=t$. Note that rigid motions map line segments to line segments of same length. Now let $E$ be a given subset of $\mathbb{F}_q^2$ with $|E|>q^{3/2}$. For $t\ne 0$, let $n_t$ be the number of line segments $(x, y)\in E\times E$ such that $||x-y||=t$. Iosevich and Rudnev \cite{IR} proved that $n_t=(1+o(1))|E|^2/q$, here $o(1)\to 0$ as $q\to \infty$. Fix a line segment $(u_0, v_0)\in E\times E$ of length $t$. For each $(x, y)\in E\times E$ such that $||x-y||=t$, there exists a unique rigid motion $(z,\theta)\in G_0$ such that $(z,\theta)x=u_0$ and $(z,\theta)y=v_0$. Let $X_t$ be the set of all corresponding rigid motions when $(x, y)\in E\times E$ runs over all pairs of length $t$. Then $|X_t|=n_t =(1+o(1))|E|^2/q$.

\begin{theorem}\label{group_gen_distance}
Let $E\subseteq \mathbb{F}_q^2$ with $|E|=q^\alpha$ and  $\alpha\in (3/2, 2)$. Let $t\in \bF_q^\ast$. Then there exists $\delta=\delta(\alpha)>0$ such that \[|X_tX_t|\gg |X_t|^{1+\delta}.\]
In particular, we have 
\[|X_tX_t| \gg \min\left\lbrace q|E|,~\frac{|E|^4}{q^4}\right\rbrace.\]
\end{theorem}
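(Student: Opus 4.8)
### Proof Proposal

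\textbf{The plan is to} apply Theorem~\ref{thm_lowXY_G0} with $X = Y = X_t$, and then carefully estimate the key combinatorial quantity
\[
\cQ := \#\{(g_1,g_2,h_1,h_2)\in X_t^2\times X_t^2:\, \dot{g_1}\dot{h_1}=\dot{g_2}\dot{h_2}\}
\]
that appears in the first term of the bound \eqref{eq2new}. Once I have good control on $\cQ$, the exponential growth statement $|X_tX_t|\gg |X_t|^{1+\delta}$ will follow by a direct case analysis, and the explicit bound $|X_tX_t|\gg \min\{q|E|,\, |E|^4/q^4\}$ will come out by substituting the known size $|X_t|=(1+o(1))|E|^2/q = (1+o(1))q^{2\alpha-1}$.

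\textbf{First I would} unpack the geometric meaning of $X_t$ via the Iosevich--Rudnev correspondence already described. Each element of $X_t$ is the unique rigid motion carrying a length-$t$ segment $(x,y)\in E\times E$ to the fixed reference segment $(u_0,v_0)$. The rotational part $\dot{g}\in H_0 = SO_2(\bF_q)$ of this motion is the rotation aligning the direction $y-x$ with the direction $v_0-u_0$; thus $\dot{g}$ is determined by the \emph{direction} of the segment $(x,y)$. Consequently, counting solutions to $\dot{g_1}\dot{h_1}=\dot{g_2}\dot{h_2}$ is equivalent to counting quadruples of length-$t$ segments in $E$ whose directions satisfy a multiplicative (angle-additive) relation in $SO_2(\bF_q)$. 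The natural strategy is to bound $\cQ$ by the number of such segments grouped by their direction: if $m_\theta$ denotes the number of length-$t$ segments $(x,y)\in E\times E$ with rotational part $\theta\in H_0$, then
\[
\cQ = \sum_{\theta\in H_0}\Big(\sum_{\alpha\beta=\theta} m_\alpha m_\beta\Big)^2,
\]
i.e.\ the squared $\ell^2$-norm of the multiplicative convolution of the sequence $(m_\theta)$ with itself. Since $\sum_\theta m_\theta = n_t \approx |E|^2/q$, and since $H_0$ is a cyclic group of order $q-\varepsilon_q \approx q$, a Cauchy--Schwarz or Young-type convolution inequality gives $\cQ \ll \big(\sum_\theta m_\theta^2\big)^2 / |H_0| \cdot |H_0| = \dots$ — the right normalization to chase here.

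\textbf{The hard part will be} obtaining a genuinely nontrivial upper bound on the direction-concentration $\sum_\theta m_\theta^2$, or equivalently showing that the length-$t$ segments do not cluster too heavily in any single direction. This is precisely where the hypothesis $\alpha\in(3/2,2)$ and the incidence geometry of $\bF_q^2$ must enter: one needs that for a typical direction $\theta$, the number $m_\theta$ of length-$t$ segments pointing in direction $\theta$ is close to its average $n_t/|H_0| \approx |E|^2/q^2$, and that the large-deviation contributions are controlled. I expect this to follow from a point-line or point-plane incidence estimate over $\bF_q$ (of the type used by Iosevich--Rudnev and successors), bounding $\sum_\theta m_\theta^2$ by $O\big(|E|^3/q + |E|^4/q^2\big)$ or a comparable quantity. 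Feeding such a bound into the convolution estimate for $\cQ$ and then into \eqref{eq2new}, the first term $q^2|X_t|^4/\cQ$ becomes the dominant (expanding) term throughout the range $\alpha\in(3/2,2)$, yielding a power saving $\delta=\delta(\alpha)>0$; the explicit form $\min\{q|E|,\,|E|^4/q^4\}$ then emerges by comparing the two terms in \eqref{eq2new} and substituting $|X_t|\approx q^{2\alpha-1}$. The main obstacle, and the step deserving the most care, is thus the \emph{equidistribution of directions of equal-length segments}, since without a power-saving bound on $\sum_\theta m_\theta^2$ the quotient $q^2|X_t|^4/\cQ$ collapses to merely $\gg|X_t|$ and no exponential growth is obtained.
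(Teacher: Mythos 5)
Your overall skeleton matches the paper's proof---apply Theorem~\ref{thm_lowXY_G0} with $X=Y=X_t$, bound $\cQ$, and substitute $|X_t|=(1+o(1))|E|^2/q$---but you have misjudged the key estimate, and the step you defer to incidence geometry is precisely where your plan breaks. No equidistribution of directions is needed. Since $g=(z,\theta)\in X_t$ sends some $(x,y)\in E\times E$ to $(u_0,v_0)$, one has $\theta(y-x)=v_0-u_0$; so once the rotational part $\theta$ is fixed, the difference $y-x=\theta^{-1}(v_0-u_0)$ is fixed, and the segment---hence the motion---is determined by $x\in E$ alone. In your notation this is the trivial $\ell^\infty$ bound $m_\theta\leq|E|$. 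The paper uses exactly this: fixing $(g_1,g_2,h_1)$ determines $\dot{h_2}=\dot{g_2}^{-1}\dot{g_1}\dot{h_1}$, and at most $|E|$ elements of $X_t$ have that rotational part, so $\cQ\leq |X_t|^3|E|$ in one line. Then $q^2|X_t|^4/\cQ\geq q^2|X_t|/|E|\approx q|E|=q^{1+\alpha}$, which already beats $|X_t|\approx q^{2\alpha-1}$ by a power whenever $\alpha<2$. Your assertion that ``without a power-saving bound on $\sum_\theta m_\theta^2$ the quotient collapses to merely $\gg|X_t|$'' is therefore false, and it matters: it sends you hunting for an unproven incidence theorem and leaves the proposal incomplete at its self-declared hard part.

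Worse, the bound you say you expect, $\sum_\theta m_\theta^2=O\bigl(|E|^3/q+|E|^4/q^2\bigr)$, would not rescue you. In the range $\alpha>1$ the second term dominates and equals $(1+o(1))|X_t|^2=\|m\|_1^2$, i.e.\ it is the completely trivial estimate; fed through your Young step $\cQ\leq\|m\|_1^2\|m\|_2^2$ it gives only $q^2|X_t|^4/\cQ\gg q^2$, and $q^2\leq|X_t|$ throughout $\alpha\geq 3/2$, so the minimum in \eqref{eq2new} collapses and no growth follows. By contrast, the elementary bound $\|m\|_2^2\leq\|m\|_\infty\|m\|_1\leq|E||X_t|$ run through the same Young step recovers $\cQ\leq|E||X_t|^3$, identical to the paper. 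So your framework is fine, but you must replace the hoped-for equidistribution input by the one-line fiber bound $m_\theta\leq|E|$; with that substitution your case analysis does yield $|X_tX_t|\gg\min\{q|E|,\,|E|^4/q^4\}\gg q^{\min\{2-\alpha,\,2\alpha-3\}}|X_t|$, which is the paper's conclusion.
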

In the above theorem, $|X_tX_t|\gg q^3$ if and only if $|E|\gg q^2$. This raises the following question: does there exist $\epsilon>0$ such that $|X_tX_t|\gg q^3$ whenever $|E|\gg q^{2-\epsilon}$?
\section{Preliminaries}

In this section, we recall some basic properties of semi-driect products and non-abelian Fourier analysis.

In $G=N\rtimes_\varphi H$, the identity element is $(1_N,1_H)$, and the inverse is given by 
\[
(z,h)^{-1}= (\varphi_{h^{-1}}(z^{-1}), h^{-1}).
\]
The product of two elements $(z_1, h_1)$ and $(z_2, h_2)$ is determined by 
\[
(z_1,h_1)(z_2,h_2) = (z_1\,\varphi_{h_1}(z_2),\, h_1h_2).
\]
Recall the notation $g:=(\ddot{g},\dot{g})=(g^{\cdot\cdot}, g^{\cdot})$ for $g\in G$. It follows from 
\[
g_1g_2 = (\ddot{g_1},\dot{g_1})(\ddot{g_2},\dot{g_2}) = (\ddot{g_1}\,\varphi_{\dot{g_1}}(\ddot{g_2}),\, \dot{g_1}\dot{g_2}), \quad g^{-1} = (\ddot{g},\dot{g})^{-1} = (\varphi_{\dot{g}^{-1}}(\ddot{g}^{-1}),\dot{g}^{-1})
\]
that $(g_1g_2)^\cdot = \dot{g_1}\dot{g_2}$ and $(g^{-1})^\cdot = (\dot{g})^{-1}$.

Let $\widehat{G}$ and $\widehat{H}$ be the sets of all (non-isomorphic) irreducible complex representations of $G$ and $H$, respectively. 
Let $Q$ and $Q'$ be positive integers. Assume that $\widehat{H}=\{\dot{\rho_0},\dot{\rho_1},\ldots,\dot{\rho}_{Q-1}\}$, here $\dot{\rho_0}$ is the trivial representation. Then all irreducible representations of $G$ of type I is given by $\widehat{G}_I = \{\rho_0,\rho_1,\ldots,\rho_{Q-1}\}$, where 
\[
\rho_r(g) = \dot{\rho_r}(\dot{g}),\quad (g\in G,\,\, 0\leq r\leq Q-1).
\]
Here $\rho_0$ is the trivial representation. We also denote the set of the irreducible representations of $G$ of type II by $\widehat{G}_{II}=\{\rho'_1,\rho_2',\ldots,\rho_{Q'}'\}$. Therefore, 
\[\widehat{G}=\{\rho_0, \ldots, \rho_{Q-1},\rho_1', \ldots, \rho_{Q'}' \}.\]

For each representation $\rho$, its degree will be denoted by $d_\rho$, and its character, denoted by $\chi_{\rho}\colon G\to \mathbb{C}$, is defined by $\chi_{\rho}(x)=\textsf{Tr}(\rho(x))$.

The main tool is non-abelian Fourier analysis. For each representation $(\rho,V)$ with $\rho\in \widehat{G}$, we may assume that the inner product on $V$ is chosen so that $\rho$ is unitary. Assume that the inner product and norm on $V$ is given by 
\[
<a,b>_{\HS}=\textsf{Tr}(b^\ast a),\quad \|a\|_{\HS}^2 = \<a,a\>_{\HS} = \textsf{Tr}(a^\ast a). 
\]
It satisfies the properties that
\[
|\<a,b\>|_{\HS}\leq \|a\|_{\HS}\|b\|_{\HS},
\]
and 
\[
\|ab\|_{\HS}\leq \|a\|_{\HS}\|b\|_{\HS}.
\]

For a function $f:\, G\rightarrow \bC$, we use the notation.
\[
\bE_{x\in G} f(x) = \frac{1}{|G|}\sum\limits_{x\in G}f(x). 
\]
The Fourier transformation is defined by
\[
\widehat{f}(\rho) = \bE_{x\in G} f(x)\rho(x),\quad (\rho\in \widehat{G}),
\]
and the Fourier inverse is given by
\[
f(x) = \sum\limits_{\rho\in \widehat{G}} d_\rho \<\widehat{f}(\rho),\rho(x)\>_{\HS},\quad (x\in G).
\]
The Parseval's identity is 
\[
\<f_1,f_2\> = \sum\limits_{\rho\in \widehat{G}} d_\rho \<\widehat{f_1}(\rho),\widehat{f_2}(\rho)\>_{\HS},
\]
where the inner product of two functions $f$ and $g$ on $G$, denoted by $\<f, g\>$, is 
$\<f,g\> = \bE_{x\in G}f(x)\overline{g(x)}.$

The convolution of two functions $f_1$ and $f_2$ on $G$ is defined by 
\[
(f_1\ast f_2)(x) = \bE_{y\in G} f_1(xy^{-1})f_2(y).
\]
We have the property that $\widehat{f_1\ast f_2}=\widehat{f_1}\widehat{f_2}$. For $p\geq 1$, the $l^p$-norm of $f$ is given by 
\[
\|f\|_p = \left(\bE_{x\in G} |f(x)|^p\right)^{1/p}.
\] 
If $1/r=1/p+1/q$, then we have the H\"{o}lder inequality $
\|fg\|_r \leq \|f\|_p\|g\|_q$.
For a subset $X\subseteq G$, one has $\|1_X\|_1=\|1_X\|_2^2=|X|/|G|$.

\section{Proof of Theorems \ref{thm_mixing}, \ref{thm_l_2}, and \ref{thm_lowbound_XY}}
\begin{proof} [Proof of Theorem \ref{thm_mixing}]
Note that the characteristic functions $1_{X_j}$ $(0\leq j\leq k)$ all take non-negative value. The number of solutions is counted by 
\begin{eqnarray*}
&&\frac{1}{|G|^k}\#\{(x_0,x_1,\ldots,x_k)\in X_0\times X_1\times \ldots\times X_k:\, x_1x_2\ldots x_k=x_0\} \\
&&= \bE_{x\in G}(1_{X_1}\ast 1_{X_2}\ast \ldots \ast 1_{X_k})(x)1_{X_0}(x) \\
&&= \<1_{X_1}\ast 1_{X_2}\ast \ldots \ast 1_{X_k},1_{X_0}\>\\
&& = \sum\limits_{\rho\in \widehat{G}}d_\rho\big\<\reallywidehat{(1_{X_1}\ast 1_{X_2}\ast \ldots \ast 1_{X_k})}(\rho),\, \widehat{1_{X_0}}(\rho)\big\>_{\HS},
\end{eqnarray*}
where the Parseval's identity is applied. Now we split the sum into two parts:
\[
\fM = \sum\limits_{r=0}^{Q-1} d_{\rho_r}\<\reallywidehat{(1_{X_1}\ast 1_{X_2}\ast \ldots \ast 1_{X_k})}(\rho_r),\, \widehat{1_{X_0}}(\rho_r)\>_{\HS},\quad \fE=\sum\limits_{j=1}^{Q'} d_{\rho'_j}\<\reallywidehat{(1_{X_1}\ast 1_{X_2}\ast \ldots \ast 1_{X_k})}(\rho_j'),\, \widehat{1_{X_0}}(\rho_j')\>_{\HS}.
\]
For type II representions, we have
\begin{eqnarray*}
|\fE|&& \leq  \sum\limits_{j=1}^{Q'} d_{\rho'_j}\|\reallywidehat{(1_{X_1}\ast 1_{X_2}\ast \ldots \ast 1_{X_k})}(\rho_j')\|_{\HS}\, \|\widehat{1_{X_0}}(\rho_j')\|_{\HS}\\
&&\leq  \left(\sum\limits_{j=1}^{Q'} d_{\rho'_j}\|\reallywidehat{(1_{X_1}\ast 1_{X_2}\ast \ldots \ast 1_{X_k})}(\rho_j')\|_{\HS}^2\right)^{1/2}\left(\sum\limits_{j=1}^{Q'} d_{\rho'_j}\|\widehat{1_{X_0}}(\rho_j')\|_{\HS}^2\right)^{1/2}.
\end{eqnarray*}
Since $d_{\rho_j'}\geq D$ $(1\leq j\leq Q')$, one has
\begin{eqnarray*}
&&\sum\limits_{j=1}^{Q'} d_{\rho'_j}\|\reallywidehat{(1_{X_1}\ast 1_{X_2}\ast \ldots \ast 1_{X_k})}(\rho_j')\|_{\HS}^2\\
&& = \sum\limits_{j=1}^{Q'} d_{\rho'_j}\|\widehat{1_{X_1}}(\rho_j')\widehat{1_{X_2}}(\rho_j')\ldots\widehat{1_{X_k}}(\rho_j')\|_{\HS}^2 \\
&&\leq \frac{1}{D^{k-1}}\sum\limits_{j=1}^{Q'} d_{\rho'_j}^k\|\widehat{1_{X_1}}(\rho_j')\|_{\HS}^2 \,\|\widehat{1_{X_2}}(\rho_j')\|_{\HS}^2\ldots \|\widehat{1_{X_k}}(\rho_j')\|_{\HS}^2 \\
&&\leq  \frac{1}{D^{k-1}}\left(\sum\limits_{j=1}^{Q'} d_{\rho'_j}\|\widehat{1_{X_1}}(\rho_j')\|_{\HS}^2\right)\left(\sum\limits_{j=1}^{Q'} d_{\rho'_j}\|\widehat{1_{X_2}}(\rho_j')\|_{\HS}^2\right)\ldots \left(\sum\limits_{j=1}^{Q'} d_{\rho'_j}\|\widehat{1_{X_k}}(\rho_j')\|_{\HS}^2\right)\\
&&\leq \frac{1}{D^{k-1}}\left(\sum\limits_{\rho\in \widehat{G}} d_{\rho}\|\widehat{1_{X_1}}(\rho)\|_{\HS}^2\right)\left(\sum\limits_{\rho\in \widehat{G}} d_{\rho}\|\widehat{1_{X_2}}(\rho)\|_{\HS}^2\right)\ldots \left(\sum\limits_{\rho\in \widehat{G}} d_{\rho}\|\widehat{1_{X_k}}(\rho)\|_{\HS}^2\right)\\
&& = \frac{1}{D^{k-1}} \|1_{X_1}\|_2^2\|1_{X_2}\|_2^2\ldots \|1_{X_k}\|_2^2 = \frac{|X_1||X_2|\ldots |X_k|}{D^{k-1}|G|^k}. 
\end{eqnarray*}
Similarly, 
\[
\sum\limits_{j=1}^{Q'} d_{\rho'_j}\|\widehat{1_{X_0}}(\rho_j')\|_{\HS}^2 \leq \|1_{X_0}\|_2^2 = \frac{|{X_0}|}{|G|}.
\]
Thus,
\[
\fE \leq \sqrt{\frac{|{X_0}||{X_1}|\ldots |{X_k}|}{D^{k-1}|G|^{k+1}}}.
\]

For type I represetations, we have 
\begin{eqnarray*}
\fM&&=\sum\limits_{r=0}^{Q-1} d_{\rho_r}\<\reallywidehat{(1_{X_1}\ast 1_{X_2}\ast \ldots \ast 1_{X_k})}(\rho_r),\, \widehat{1_{X_0}}(\rho_r)\>_{\HS} \\
&&= \sum\limits_{r=0}^{Q-1}d_{\rho_r}\bE_{u\in G}\bE_{v\in G}(1_{X_1}\ast 1_{X_2}\ast \ldots \ast 1_{X_k})(u)1_{X_0}(v)\<\rho_r(u),\rho_r(v)\>_{\HS}\\
&&=\bE_{u\in G}\bE_{v\in G}(1_{X_1}\ast 1_{X_2}\ast \ldots \ast 1_{X_k})(u)1_{X_0}(v) \sum\limits_{r=0}^{Q-1}d_{\rho_r}\chi_{\rho_r}(v^{-1}u).
\end{eqnarray*}
For given $u,v\in G$, we have $(v^{-1}u)^\cdot = \dot{v}^{-1}\dot{u}$. Then
\begin{eqnarray*}
\sum\limits_{r=0}^{Q-1}d_{\rho_r}\chi_{\rho_r}(v^{-1}u) = \sum\limits_{r=0}^{Q-1}d_{\dot{\rho_r}}\chi_{\dot{\rho_r}}(\dot{v}^{-1}\dot{u}) = \sum\limits_{\rho\in \widehat{H}}d_\rho\chi_{\rho}(\dot{v}^{-1}\dot{u}) = 
\begin{cases}
|H|,\quad &\text{if }\dot{u}= \dot{v},\\
0,&\text{if }\dot{u}\neq \dot{v}.
\end{cases}
\end{eqnarray*}
Now we have
\begin{eqnarray*}
\fM&&= \frac{|H|}{|G|^2}\sum\limits_{u,v\in G\atop \dot{u}=\dot{v}}(1_{X_1}\ast 1_{X_2}\ast \ldots \ast 1_{X_k})(u)1_{X_0}(v) \\
&&= \frac{|H|}{|G|^{k+1}} \#\{(x_0,x_1,\ldots,x_k)\in X_0\times X_1\times\ldots\times X_k:\, \dot{x_1}\dot{x_2}\ldots \dot{x_k}=\dot{x_0}\}. 
\end{eqnarray*}
Combining all above formulae, the Theorem \ref{thm_mixing} then follows. 
\end{proof}

\begin{proof}[Proof of Theorem \ref{thm_l_2}]
Note that 
\begin{eqnarray*}
&& \|1_{X_1}\ast 1_{X_2}\ast \ldots \ast 1_{X_k}\|_2^2= \sum\limits_{\rho\in \widehat{G}} d_\rho\|\reallywidehat{(1_{X_1}\ast 1_{X_2}\ast \ldots \ast 1_{X_k})}(\rho_r)\|_{\HS}^2= \sum\limits_{\rho\in \widehat{G}} d_\rho \|\widehat{1_{X_1}}(\rho)\widehat{1_{X_2}}(\rho)\ldots \widehat{1_{X_k}}(\rho)\|_{\HS}^2.
\end{eqnarray*}
We use similar argument as in the proof of Theorem \ref{thm_mixing}. Let
\[
\fM = \sum\limits_{r=0}^{Q-1} d_{\rho_r}\|\reallywidehat{(1_{X_1}\ast 1_{X_2}\ast \ldots \ast 1_{X_k})}(\rho_r)\|_{\HS}^2,\quad \fE=\sum\limits_{j=1}^{Q'} d_{\rho'_j}\|\widehat{1_{X_1}}(\rho'_j)\widehat{1_{X_2}}(\rho'_j)\ldots \widehat{1_{X_k}}(\rho'_j)\|_{\HS}^2.
\]
For type II representions, we have
\begin{eqnarray*}
0\leq \fE &&\leq \frac{1}{D^{k-1}}\sum\limits_{j=1}^{Q'} \left(d_{\rho'_j}\|\widehat{1_{X_1}}(\rho'_j)\|_{\HS}^2\right)\left(d_{\rho'_j}\|\widehat{1_{X_2}}(\rho'_j)\|_{\HS}^2\right)\ldots \left(d_{\rho'_j}\|\widehat{1_{X_k}}(\rho'_j)\|_{\HS}^2\right)\\
&&\leq \frac{1}{D^{k-1}}\left(\sum\limits_{j=1}^{Q'}d_{\rho'_j}\|\widehat{1_{X_1}}(\rho'_j)\|_{\HS}^2\right)\left(\sum\limits_{j=1}^{Q'}d_{\rho'_j}\|\widehat{1_{X_2}}(\rho'_j)\|_{\HS}^2\right)\ldots \left(\sum\limits_{j=1}^{Q'}d_{\rho'_j}\|\widehat{1_{X_k}}(\rho'_j)\|_{\HS}^2\right)\\
&&\leq \frac{1}{D^{k-1}}\|1_{X_1}\|_2^2\|1_{X_2}\|_2^2\ldots \|1_{X_k}\|_2^2 = \frac{|X_1||X_2|\ldots |X_k|}{D^{k-1}|G|^k}.
\end{eqnarray*}

For type I represetations, we have 
\begin{eqnarray*}
\fM&&=\sum\limits_{r=0}^{Q-1} d_{\rho_r}\<\reallywidehat{(1_{X_1}\ast 1_{X_2}\ast \ldots \ast 1_{X_k})}(\rho_r),\, \reallywidehat{(1_{X_1}\ast 1_{X_2}\ast \ldots \ast 1_{X_k})}(\rho_r)\>_{\HS} \\
&&= \sum\limits_{r=0}^{Q-1}d_{\rho_r}\bE_{u\in G}\bE_{v\in G}(1_{X_1}\ast 1_{X_2}\ast \ldots \ast 1_{X_k})(u)(1_{X_1}\ast 1_{X_2}\ast \ldots \ast 1_{X_k})(v)\<\rho_r(u),\rho_r(v)\>_{\HS}\\
&&=\bE_{u\in G}\bE_{v\in G}(1_{X_1}\ast 1_{X_2}\ast \ldots \ast 1_{X_k})(u)(1_{X_1}\ast 1_{X_2}\ast \ldots \ast 1_{X_k})(v) \sum\limits_{r=0}^{Q-1}d_{\rho_r}\chi_r(v^{-1}u)\\
&&= \frac{|H|}{|G|^2}\sum\limits_{u,v\in G\atop \dot{u}=\dot{v}}(1_{X_1}\ast 1_{X_2}\ast \ldots \ast 1_{X_k})(u)(1_{X_1}\ast 1_{X_2}\ast \ldots \ast 1_{X_k})(v) \\
&&= \frac{|H|}{|G|^{2k}} \#\{(x_1,x_2,\ldots,x_k,y_1,y_2,\ldots,y_k)\in (X_1\times X_2\times\ldots\times X_k)^2:\, \dot{x_1}\dot{x_2}\ldots\dot{x_k}=\dot{y_1}\dot{y_2}\ldots\dot{y_k}\}. 
\end{eqnarray*}
The theorem now follows.
\end{proof}

The proof of Theorem \ref{thm_lowbound_XY} is based on the following lemma.
\begin{lemma} \label{lem}
Let $G$ be a group and $Z\subseteq G$. Suppoes that $f:G\rightarrow \bC$ is a function whose support is contained in $Z$. Then  
\[
|Z| \geq |G|\frac{\|f\|_1^2}{\|f\|_2^2}
\]
\end{lemma}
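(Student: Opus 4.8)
The plan is to unwind the normalised norms and reduce the statement to a single application of the Cauchy--Schwarz inequality. Since the $\ell^p$-norms defined in the paper carry the averaging factor $1/|G|$, and since $f$ vanishes outside $Z$ by hypothesis, I would first record that
\[
\|f\|_1 = \frac{1}{|G|}\sum_{x\in Z}|f(x)|,\qquad \|f\|_2^2 = \frac{1}{|G|}\sum_{x\in Z}|f(x)|^2,
\]
where in both cases the sum over $G$ has been restricted to $Z$ because $f$ is supported there.

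Substituting these expressions into the claimed inequality, which is equivalent to $|Z|\,\|f\|_2^2 \geq |G|\,\|f\|_1^2$, and clearing the powers of $|G|$, I expect the statement to collapse to
\[
|Z|\sum_{x\in Z}|f(x)|^2 \;\geq\; \left(\sum_{x\in Z}|f(x)|\right)^2.
\]
This is precisely the Cauchy--Schwarz inequality applied to the constant function $1$ and $|f|$ on $Z$: writing $\sum_{x\in Z}|f(x)| = \sum_{x\in Z} 1\cdot|f(x)|$ and estimating, one obtains $\big(\sum_{x\in Z}|f(x)|\big)^2 \leq \big(\sum_{x\in Z}1\big)\big(\sum_{x\in Z}|f(x)|^2\big) = |Z|\sum_{x\in Z}|f(x)|^2$, which is exactly the required bound.

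There is essentially no genuine obstacle here; the only point demanding care is the bookkeeping of the averaging normalisation in $\|\cdot\|_1$ and $\|\cdot\|_2$, so that the factors of $|G|$ cancel correctly and leave a clean Cauchy--Schwarz statement rather than one with a spurious power of $|G|$. As a sanity check on the direction of the inequality, I would note that equality holds exactly when $|f|$ is constant on $Z$, i.e.\ when $f$ is, up to phase, a scalar multiple of $1_Z$; this is the expected extremal configuration and confirms the orientation of the bound.
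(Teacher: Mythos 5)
Your proof is correct and follows essentially the same route as the paper: the paper's proof also restricts the averaged sums to $Z$ and applies Cauchy--Schwarz to $1$ and $|f|$, yielding $\|f\|_1^2 \leq \tfrac{|Z|}{|G|}\|f\|_2^2$ directly. Your added remark on the equality case ($|f|$ constant on $Z$) is a fine sanity check but not needed.
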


\begin{proof}
By Cauchy-Schwartz inequality, we have 
\begin{eqnarray*}
\|f\|_1^2 = \left(\bE_{x\in G}|f(x)|\right)^2 = \left(\frac{1}{|G|}\sum\limits_{x\in Z}|f(x)|\right)^2 \leq \left(\frac{1}{|G|}\sum\limits_{x\in Z}1^2\right)\left(\frac{1}{|G|}\sum\limits_{x\in Z}|f(x)|^2\right) = \frac{|Z|}{|G|}\|f\|_2^2.
\end{eqnarray*}
The proof is completed.
\end{proof}

\begin{proof}[Proof of Theorem \ref{thm_lowbound_XY}]
Note that the support of $1_{X_1}\ast 1_{X_2}\ast \ldots \ast 1_{X_k}$ is exactly $X_1X_2\ldots X_k$. By Lemma \ref{lem}, we have 
\[
|X_1X_2\ldots X_k|  \geq |G|\frac{\|1_{X_1}\ast 1_{X_2}\ast \ldots \ast 1_{X_k}\|_1^2}{\|1_{X_1}\ast 1_{X_2}\ast \ldots \ast 1_{X_k}\|_2^2}.
\] 
The $l^1$-norm can be computed directly by  
\begin{equation} \label{eq_l1}
\|1_{X_1}\ast 1_{X_2}\ast \ldots \ast 1_{X_k}\|_1 = \frac{1}{|G|^k}\sum\limits_{x\in G} \sum\limits_{x_1,\ldots,x_k\in G\atop x_1x_2\ldots x_k = x}1_{X_1}(x_1)1_{X_2}(x_2)\ldots 1_{X_k}(x_k)
=  \frac{|X_1||X_2|\ldots |X_k|}{|G|^k}.
\end{equation}
Combing Theorem \ref{thm_l_2}, Theorem \ref{thm_lowbound_XY} then follows. 
\end{proof}

\section{Proof of Theorem \ref{thm_repn_dim_H0}}

In the rest of this paper, we always write $Q=q-\varepsilon_q=|H_0|$ and $Q'= q+\varepsilon_q$. Then $QQ'=q^2-1$.

For $G_0=N_0\rtimes_\varphi H_0$ with $N_0=\bF_q^2$ and $H_0= SO_2(\bF_q)$, the group operation is given by 
\[
(z_1,h_1)(z_2,h_2) = (z_1+h_1z_2,\, h_1h_2).
\]
We also have the identity element $(0,1)$, where $0$ is the zero vector in $N_0$ and $1$ is the identity matrix in $H$, and the inverse of $(z, h)$ is
$
(z,h)^{-1}= (-h^{-1}z, h^{-1}).
$
Recall that $H_0$ is abelien, a conjugation of elements is given by
\[
(w,k)(z,h)(w,k)^{-1} = (w+kz,kh)(-k^{-1}w,k^{-1}) = (kz+(1-h)w,h). 
\]

\begin{lemma} \label{lem_invertible}
For any $h\in H_0$ with $h\neq 1$, the matrix $1-h$ is ivertible.
\end{lemma}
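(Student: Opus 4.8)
The plan is to compute $\det(1-h)$ explicitly and show it is nonzero whenever $h \neq 1$. Recall that elements of $H_0 = SO_2(\bF_q)$ have the form
\[
h = \begin{bmatrix} a & -b \\ b & a \end{bmatrix}, \qquad a^2 + b^2 = 1,
\]
so that
\[
1 - h = \begin{bmatrix} 1-a & b \\ -b & 1-a \end{bmatrix}.
\]
First I would simply expand the determinant, obtaining $\det(1-h) = (1-a)^2 + b^2 = 1 - 2a + a^2 + b^2$. Using the defining relation $a^2 + b^2 = 1$, this collapses to $\det(1-h) = 2 - 2a = 2(1-a)$. Since $q$ is odd, $2$ is a unit in $\bF_q$, so $1-h$ is invertible precisely when $a \neq 1$.

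It then remains to argue that $a = 1$ forces $h = 1$, i.e.\ that $a=1$ is equivalent to $h$ being the identity matrix. This is the only point requiring a small check: if $a = 1$, the relation $a^2 + b^2 = 1$ gives $b^2 = 0$, hence $b = 0$ (as $\bF_q$ is a field, hence an integral domain), and therefore $h = \begin{bmatrix} 1 & 0 \\ 0 & 1\end{bmatrix} = 1$. Conversely $h = 1$ clearly gives $a=1$. Thus for $h \neq 1$ we have $a \neq 1$, whence $\det(1-h) = 2(1-a) \neq 0$ and $1-h$ is invertible.

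I do not anticipate a genuine obstacle here; the statement is an elementary linear-algebra computation over $\bF_q$, and the only mild subtlety is keeping track of the hypothesis that $q$ is odd, which is exactly what guarantees $2 \neq 0$ and hence that the factor of $2$ in $\det(1-h)$ does not cause a degenerate cancellation. (If $q$ were even, $\det(1-h)$ would vanish identically and the lemma would fail, so this hypothesis is essential rather than cosmetic.) This lemma will presumably be used to show that each nonidentity conjugacy class of the form $\{(kz + (1-h)w, h) : \ldots\}$ fills out a full coset $N_0 \times \{h\}$ as $w$ ranges over $N_0$, which in turn feeds the representation-theoretic count in Theorem \ref{thm_repn_dim_H0}.
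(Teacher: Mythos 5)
Your proposal is correct and follows essentially the same route as the paper: expand $\det(1-h) = (1-a)^2 + b^2 = 2(1-a)$ using $a^2+b^2=1$, then conclude invertibility from $h \neq 1$. You are in fact slightly more careful than the paper, which leaves implicit both the check that $a=1$ forces $b=0$ (hence $h=1$) and the role of $q$ being odd in ensuring $2 \neq 0$.
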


\begin{proof}
Let 
\[
h=\left[\begin{matrix}
a &-b\\
b &a
\end{matrix}\right],
\]
where $a^2+b^2=1$. We have
\[
\textsf{det}(1 - h) = \left|\begin{matrix}
1-a &b\\
-b &1-a
\end{matrix}\right| = 1-2a+a^2+b^2 = 2(1-a).
\]
Since $h$ is not the identity matrix, we have $a\neq 1$ and $\text{det}(1-h)\neq 0$. So $1-h$ is invertible. 
\end{proof}

Recall that $\varphi:\, H_0\rightarrow Aut(N_0)$ is given by $\varphi_h(x)=hx$. By Lemma \ref{lem_invertible}, we have 
\[
\textsf{Stab}_{H_0}(x) = \{h\in H_0:\, hx=x\} = \{1\}
\]
for $x\in N_0\setminus \{0\}$. It follows that the cardinality of the orbit of $x$ is
\begin{equation} \label{eq_orbit}
|\textsf{Orb}_{H_0}(x)| = \frac{|H_0|}{|\textsf{Stab}_{H_0}(x)|} = Q. 
\end{equation}
Indeed, apart from the orbit of $0$ with length one, there are $Q'$ disjoint orbits with length $Q$. It satiesfies that $|N_0| = q^2 = 1+QQ'$.

\begin{proof} [Proof of Theorem \ref{thm_repn_dim_H0}]
The cardinality of $G_0$ is $|G_0|=Qq^2$. Since $H_0$ is an cyclic group, the type I irreducible representations of $G_0$ are lifted from representations of $H_0$, and all have degree $1$. More explicitly, let $\gamma$ be a generator of $H_0$, i.e., $H_0=\{\gamma^j:\, 0\leq j\leq Q-1\}$, then the representations can be given by 
\begin{equation} \label{eq_rep_detail}
\rho_r\big((z,\gamma^j)\big) = \dot{\rho_r}(\gamma^j)= e^{2\pi \i \frac{rj}{Q}},\quad (z\in N,\, 0\leq j\leq Q-1).
\end{equation}
Moreover, since $\widetilde{N_0}=\{(z,1):\, z\in N\}$ is an abelian subgroup of $G_0$ with $[G_0:\, \widetilde{N_0}]=|H_0|$, the dimension of any irreducible representation of $G_0$ is no larger than $|H_0|$.  

In the following, we will find all conjugacy classes of $G_0$ and then determine the degree of type II representations. For $(z,1)\in G_0$ with $z\neq 0$, 
\[
(w,k)(z,1)(w,k)^{-1} = (kz,1)
\]
for any $(w,k)\in G_0$. Here $kz\neq 0$, since $k$ is invertible. Now the conjugacy class of $(z,1)$ is
\[
[(z,1)] = \{(kz,1):\, k\in H_0\}.
\]
It follows from \eqref{eq_orbit} that 
\[
\#[(z,1)]=|\textsf{Orb}_{H_0}(z)| =Q.
\]
Indeed, the set $\widetilde{N_0}\setminus \{(0,1)\}$ is divided into $Q'$ disjoint conjugacy classes, each of cardinality $Q$. Moreover, $[(0,1)]=\{(0,1)\}$ is another conjugacy class. 

Next we consider the conjugacy class of $(0,h)$ for some $h\in H_0\setminus \{1\}$. One has
\[
(w,k)(0,h)(w,k)^{-1}  = \big((1-h)w,h\big) 
\]
for any $(w,k)\in G_0$. Since $1-h$ is invertible by Lemma \ref{lem_invertible}, one has
\[
[(0,h)] = \{(y,h):\, y\in N_0\}.
\]
There are $Q-1$ such conjugacy classes. 

Now, let $\rho_0$, $\rho_r$ $(r=1,2,\ldots, q-\varepsilon_q-1)$ and $\rho_j^\prime$ $(j=1,2,\ldots, Q')$ be the irreducible representations of $G_0$ corresponding to the conjugacy classes $[(0,1)]$, $[(0,h)]$ $(h\neq 1)$ and $[(z,1)]$ $(z\neq 0)$. Then 
\[
Q^2Q'\geq \sum\limits_{j=1}^{Q'} d_{\rho_i^\prime}^2 = |G|^2 - \sum\limits_{r=0}^{Q-1}d_{\rho_r}^2 =  Qq^2 - Q = Q^2Q'. 
\]
It follows that the equality holds, i.e., $d_{\rho_i^\prime}=Q$ for all $j=1,2,\ldots,Q'$. The proof is completed. 
\end{proof}

%{\color{red}We remark that the group $G_1=N\rtimes_\varphi H_1$ is $(q-1)/2$-quasirandom. Indeed, if we consider an irreducible complex representation $\rho$ of $G_1$ of type II, then $\rho((z,1))$ is not the identity for some $z\neq 0$. Since $G_1\supseteq G_0$, then $\rho|_{G_0}$ is a representation of $G_0$ of type II, which satisfies that $d_\rho\geq Q=q-\varepsilon$ by Theorem \ref{thm_repn_dim_H0}. Moreover, a non-trivial type I representation $\rho$ of $G\sim N\rtimes_\varphi SL_2$ is lifted from that of $SL_2$, so $d_\rho\geq (q-1)/2$.}

\section{Proof of Theorems \ref{thm_bias} and \ref{group_gen_distance}}
%Next, we show another lower bound of $XY$. Assume that all the irreducible complex representations of $G_0$ of type I are $\rho_{0},\rho_1,\ldots,\rho_{Q-1}$. 
\begin{proof}[Proof of Theorem \ref{thm_bias}] Recall that $Q=q-\epsilon_q$. For any $0\leq r\leq Q-1$, we have $|\widehat{1_X}(\rho_r)|\leq \|1_X\|_1 = |X|/|G_0|$. Assume that $\{0,1,\ldots,Q-1\}=\Gamma_0\sqcup\Gamma_1$, where $|\Gamma_0|=k$ and $\sup_{r\in \Gamma_1}|\widehat{1_X}(\rho_r)| \leq M$. We now split the sums
\[
\|1_X\ast 1_X\|_2^2 = \sum\limits_{\rho\in \widehat{G_0}} d_\rho \|\widehat{1_X\ast 1_X}(\rho)\|_{\HS}^2 = \sum\limits_{\rho\in \widehat{G_0}}d_\rho \|\widehat{1_X}(\rho)^2\|_{\HS}^2
\]
into three parts
\[
\Sigma_0 = \sum\limits_{r\in \Gamma_0} d_{\rho_r}  |\widehat{1_X}(\rho_r)|^4 \leq k\frac{|X|^4}{|G_0|^4},~~
\Sigma_1  \leq \sum\limits_{r\in \Gamma_1} d_{\rho_r} |\widehat{1_X}(\rho_r)|^4 \leq  (Q-k)M^4,
\]
and 
\begin{eqnarray*}
\Sigma_2 &&= \sum\limits_{j=1}^{Q'} d_{\rho_j^\prime}\|\widehat{1_X}(\rho_j^\prime)^2\|_{\HS}^2 \leq \frac{1}{D}\sum\limits_{j=1}^{Q'} d^2_{\rho_j^\prime}\|\widehat{1_X}(\rho_j^\prime)\|_{\HS}^2\|\widehat{1_X}(\rho_j^\prime)\|_{\HS}^2\\
&&\leq \frac{1}{D}\left(\sum\limits_{j=1}^{Q'} d_{\rho_j^\prime}\|\widehat{1_X}(\rho_j^\prime)\|_{\HS}^2\right)^2 \leq \frac{1}{D}\left(\|1_X\|_2^2\right)^2 = \frac{1}{D}\frac{|X|^2}{|G_0|^2}.
\end{eqnarray*}
Thus, 
\[
\|1_X\ast 1_X\|_2^2 \leq k\frac{|X|^4}{|G_0|^4}+ (Q-k)M^4+\frac{1}{D}\frac{|X|^2}{|G_0|^2},
\]
Using Theorem \ref{thm_repn_dim_H0}, Lemma \ref{lem} and \eqref{eq_l1}, one obtains
\[
|XX| \gg \min \left\{\frac{|G_0|}{k}, \,\frac{|X|^4}{|G_0|^3(Q-k)M^4},\, \frac{D|X|^2}{|G_0|}\right\} \gg \min \left\{\frac{q^3}{k}, \,\frac{|X|^4}{q^{10}M^4},\, \frac{|X|^2}{q^2}\right\}. 
\]
The theorem then follows.
\end{proof}

\begin{example} \label{ex2}
Let $\cX$ be the set chosen in Example \ref{ex1}. Recall that $Q=q-\varepsilon_q=kl$. For $0\leq r\leq Q-1$, by \eqref{eq_rep_detail} we have 
\begin{eqnarray*}
\widehat{1_{\cX}}(\rho_r) = \bE_{x\in G}1_{\cX}(x)\rho_r(x) = \frac{1}{|G_0|}\sum\limits_{z\in N}\sum\limits_{j=0}^{l-1} e^{2\pi \i \frac{rk}{Q} j} \\
= \frac{1}{|G_0|}\sum\limits_{z\in N}\sum\limits_{j=0}^{l-1} e^{2\pi \i \frac{r}{l} j} = \begin{cases}\frac{1}{k},\quad &\text{if }l|r,\\0,&\text{if }l\nmid r.\end{cases}
\end{eqnarray*}
As a result, in Theorem \ref{thm_bias}, we can take $M=0$ for all but $Q/l=k$ irreducible representations of dimension $1$, and obtain
\[
\frac{q^3}{k}=|\cX|=|\cX\cX|\gg \min\left\{\frac{q^3}{k},\, \frac{q^4}{k^2}\right\} \gg \frac{q^3}{k},
\]
which gives the correct order.
\end{example}

\begin{proof}[Proof of Theorem \ref{group_gen_distance}]
We first start with the estimate that 
\[\#\{(g_1,g_2,h_1,h_2)\in X_t^4:\, \dot{g_1}\dot{h_1}=\dot{g_2}\dot{h_2}\}\le |X_t|^3|E|.\]
Indeed, if we fix $g_1, g_2$, and $h_1$, then the number of $h_2$ is at most the size of $E$. 
Thus, it follows from Theorem \ref{thm_lowXY_G0} that
\[
|X_tX_t| \gg \min \left\{\frac{q^2|X_t|^4}{|X_t|^3|E|},\, \frac{|X_t|^2}{q^2}\right\} \gg \min\{q|E|,q^{-4}|E|^4\}\gg q^{\min\{1+\alpha, 4(\alpha-1)\}}\gg q^{\min\{2-\alpha,2\alpha-3\}}|X_t|.
\]
That is to say, we can take $\delta(\alpha)=\min\{2-\alpha,2\alpha-3\}>0$. The proof is completed.
\end{proof}

\section{Acknowledgements}
The authors would like to thank Junbin Dong for helpful discussions on the representation theory. The first author would like to thank the Vietnam Institute for Advanced Study in Mathematics (VIASM) for the hospitality and for the excellent working condition. The second author is supported by funds provided by ShanghaiTech University.

\end{document}